\title{Constructing Reducible Brill--Noether Curves}
\begin{document}
\maketitle

\begin{abstract}
A fundamental problem in the theory of algebraic curves in projective space
is to understand which reducible curves arise as limits of smooth curves
of general moduli.
Special cases of this question and variants have been critical in the resolution
of many problems in the theory of algebraic curves
over the past half century; examples include 
Sernesi's proof of the existence of components of the Hilbert scheme with the expected number
of moduli when the Brill--Noether number is negative \cite{sernesi},
and Ballico's proof the Maximal Rank Conjecture for quadrics \cite{ball2}.

In this paper, we give close-to-optimal bounds on this problem
when the nodes are general points and the components are general in moduli.

The results given here significantly extend those cases
established by Sernesi \cite{sernesi}, Ballico \cite{ball2}, and others.
As explained in \cite{over}, they also play a key role in the author's proof
of the Maximal Rank Conjecture \cite{mrc}.
\end{abstract}

\section{Introduction}

A central problem in algebraic geometry is to understand aspects of the geometry of general curves
in projective space.
For properties preserved by deformation,
a powerful and flexible tool for this purpose is degeneration: We establish the desired property
at a reducible curve, and show this reducible curve may be deformed to a general smooth curve.
Degeneration techniques have enabled
the proof of many such results,
including Eisenbud and Harris's proof of the Brill--Noether theorem and related results 
via limit linear series \cite{eis-har},
Sernesi's proof of the existence of components of the Hilbert scheme with the expected number
of moduli when the Brill--Noether number is negative \cite{sernesi},
Gieseker's proof of the existence of space curves with specified degree and genus \cite{gie},
and (using the present work) the Maximal Rank Conjecture \cite{mrc}.

The goal of the present paper is to make such arguments
easier by systematically studying which reducible curves
in projective space can be deformed to general smooth curves:

\begin{quest*} If $f \colon C_1 \cup_\Gamma C_2 \to \pp^r$
is a map from a reducible curve, under what conditions can $f$ be deformed to an
immersion of a general smooth curve?
\end{quest*}

\begin{center}
\begin{tikzpicture}
\filldraw (1, 1) circle [radius=0.05];
\filldraw (2, 1) circle [radius=0.05];
\filldraw (1, 2) circle [radius=0.05];
\filldraw (2, 2) circle [radius=0.05];
\draw (0, 2.5) .. controls (0.5, 2.5) .. (1, 2);
\draw (0.25, 0.5) .. controls (0.5, 0.5) .. (1, 1);
\draw (1, 1) .. controls (1.5, 1.5) .. (2, 1);
\draw (1, 2) .. controls (1.5, 1.5) .. (2, 2);
\draw (2, 1) .. controls (3, 0) and (3, 3) .. (2, 2);
\draw (3, 2.5) .. controls (2, 2.5) and (1.5, 2.5) .. (2, 2);
\draw (2, 2) .. controls (2.5, 1.5) .. (2, 1);
\draw (2, 1) .. controls (1.5, 0.5) .. (1, 1);
\draw (1, 1) .. controls (0.5, 1.5) .. (1, 2);
\draw (1, 2) .. controls (1.25, 2.25) .. (1.5, 2.25);
\draw (-0.2, 2.5) node{$C_1$};
\draw (3.3, 2.5) node{$C_2$};
\draw [decorate, decoration={brace, mirror, amplitude=0.75ex}] (0.8, 0.5) -- (2.2, 0.5);
\draw (1.5, 0.15) node{$\Gamma$};
\end{tikzpicture}
\end{center}

In the present paper, we focus on the case where the ``pieces'' of $f$ are general, namely
when $C_1$ and $C_2$ are of general moduli and $f(\Gamma)$ is a general set of points in $\pp^r$.
We also consider variants where $f(\Gamma)$ is a general set of points in
some linear space $\Lambda \subset \pp^r$.

To fix notation,
write $\bar{M}_g(\pp^r, d)$ for Kontsevich's space of stable maps 
$C \to \pp^r$ of degree~$d$, from a nodal curve $C$
of genus $g$.
There is a natural map
$\bar{M}_g(\pp^r, d) \to \bar{M}_g$.

\begin{defi}
We refer to a stable map $C \to \pp^r$
as a
\emph{Weak Brill--Noether curve} (\emph{WBN-curve}) if it
corresponds to a point in a component of
$\bar{M}_g(\pp^r, d)$ which both
dominates $\bar{M}_g$,
and whose generic member is a map
from a smooth curve which is either nondegenerate or nonspecial, and which is an immersion if $r \geq 3$,
birational onto its image if $r = 2$, and finite if $r = 1$.

In the former case, we refer to it as a \emph{Brill--Noether curve} (\emph{BN-curve});
in the later case, we refer to it as a \emph{limit NonSpecial curve} (\emph{NS-curve});
and if is both nonspecial and nondegenerate,
we refer to it as a
\emph{Nondegenerate NonSpecial curve} (\emph{NNS-curve}).
Note that a general BN-curve is an NNS-curve if and only if $d \geq g + r$.

Additionally, we say a stable map $f \colon C \to \pp^r$
is \emph{limit linearly normal} if it is a limit of linearly normal stable maps.
Note that a general BN-curve is limit linearly normal if and only if $d \leq g + r$.

Finally, we say a stable map $f \colon C \to \pp^r$
is an \emph{interior curve} if it lies in a unique component of the
corresponding space of stable maps.
\end{defi}

The Brill--Noether theorem
asserts that BN-curves of degree $d$ and genus $g$ in $\pp^r$ exist
if and only if the \emph{Brill--Noether number}
\[\rho(d, g, r) := (r + 1)d - rg - r(r + 1) \geq 0;\]
and that in this case, the locus of BN-curves forms an irreducible
component of $\bar{M}_g(\pp^r, d)$.

\medskip

Returning to our main question: In order for $f$ to be deformable to an immersion
of a general smooth curve, it is obviously necessary for an immersion of that degree
of a general smooth curve of that genus to exist.
The natural conjecture would be that this is sufficient:

\begin{conj} \label{conjrbn} Let $f \colon C_1 \cup_\Gamma C_2 \to \pp^r$ be a stable map from a reducible curve,
such that $f|_{C_1}$ and $f|_{C_2}$ are BN-curves, and $f(\Gamma)$ is a general set of $n = \# \Gamma$ points in $\pp^r$.
Then $f$ is a BN-curve if and only if it has nonnnegative Brill--Noether number.
\end{conj}

Implicit in the formulation of this conjecture are a couple of inequalities
in the degree $d_i$ and genus $g_i$ of $f|_{C_i}$.
Indeed, since
the curve $f$ is of degree $d_1 + d_2$ and genus $g_1 + g_2 + n - 1$,
the condition of having
nonnegative Brill--Noether number is just
\[(r + 1)(d_1 + d_2) - r(g_1 + g_2 + n - 1) - r(r + 1) \geq 0.\]

Moreover, since $f(\Gamma)$ is a general set of $n$ points in $\pp^r$,
the natural maps
$\bar{M}_{g_i,n}^\circ(\pp^r, d_i) \to (\pp^r)^n$ must be
dominant. In particular we must have
\[(r + 1)d_i - (r - 3)(g_i - 1) + n = \dim \bar{M}_{g_i,n}^\circ(\pp^r, d_i) \geq \dim (\pp^r)^n = rn,\]
or upon rearrangement,
\begin{equation} \label{nmax}
(r + 1)d_i - (r - 3)(g_i - 1) - (r - 1)n \geq 0.
\end{equation}

%
%

\medskip

\noindent
The difficulty of Conjecture~\ref{conjrbn} is dictated by the parameter $n$:
\begin{itemize}
\item When $n$ is small relative to $r$, the automorphism group $\aut \pp^r$
acts transitively (or close to transitively) on the possible sets $f(\Gamma)$.
It is thus trivial (or relatively easy) to see that $f$ is a BN-curve.

Already, instances of this conjecture in or close to this easier range --- or variants
when $f(\Gamma)$ is general in a linear space $\Lambda \subset \pp^r$ --- have
had many applications; examples include:

\begin{itemize}
\item In \cite{severi}, Sernesi proves a variant of Conjecture~\ref{conjrbn}
when $f(\Gamma)$ is general in a hyperplane, in the special case where
$f|_{C_2}$ is a rational normal curve in a hyperplane and $n \leq r + 2$.

Using this, he deduces the existence of components of the Hilbert scheme with the expected
number of moduli when the Brill--Noether number is negative.

\item In \cite{ball2}, Ballico proves a variant of Conjecture~\ref{conjrbn}
when $f(\Gamma)$ is general in a hyperplane, in the special case where
$f|_{C_1}$ is an elliptic normal curve and $n = r + 1$.

Using this, he deduces the Maximal Rank Conjecture for quadrics.
\end{itemize}

\item When $n$ is large relative to $r$, but still small relative to the maximum
possible given the bounds \eqref{nmax}, Conjecture~\ref{conjrbn} is of intermediate difficulty.

\item As $n$ approaches the maximum possible value given the bounds \eqref{nmax},
Conjecture~\ref{conjrbn} becomes more difficult.

One reason for this difficulty is the existence of counterexamples
when the bounds \eqref{nmax} are achieved.
Namely, if $C_1 = C_2$, and $f|_{C_1} = f|_{C_2}$, and \eqref{nmax} is an equality,
it is an easy exercise that Conjecture~\ref{conjrbn} is \emph{false}.

Developing techniques that work in this regime is important for applications.
Indeed, the cases of Conjecture~\ref{conjrbn} --- or more precisely its analog
when $f(\Gamma)$ is general in a hyperplane --- with $n$ close to the maximum
possible given the bounds \eqref{nmax} are critical in the author's proof of the
Maximal Rank Conjecture \cite{mrc}. 
\end{itemize}

The central goal of the present paper is to develop a flexible technique
to study cases of Conjecture~\ref{conjrbn} that works
when the bounds \eqref{nmax} are close to an equality.
In light of the counterexamples mentioned above when \eqref{nmax} is achieved, the best
one might hope for is:

\begin{conj} \label{conjrbn-refined}
Conjecture~\ref{conjrbn} holds as long as \eqref{nmax} is not an equality
for some component, i.e.\ so long as for at least one $i \in \{1, 2\}$ we have
\[(r + 1) d_i - (r - 3)(g_i - 1) - (r - 1)n \geq 1.\]
\end{conj}

Our main theorem establishes Conjecture~\ref{conjrbn} for NNS-curves, so long as
\eqref{nmax} is at least $4$ away from an equality for one component, i.e.\ so long as
\[(r + 1) d_i - (r - 3)(g_i - 1) - (r - 1)n \geq 4.\]
This is close to optimal, in the sense that it would be false if $4$ were replaced by $0$.

We actually give a slightly stronger statement below since this slightly stronger
version is more useful as an inductive hypothesis:

\begin{thm} \label{main}
Let $C_i \to \pp^r$ (for $i \in \{1, 2\}$) be NNS-curves
of degree $d_i$ and genus $g_i$, which pass through
a set $\Gamma \subset \pp^r$ of $n \geq 1$ general points.
Suppose either that both curves are limit linearly normal
(equivalently $d_i = g_i + r$ for both $i \in \{1, 2\}$),
or alternatively that for at least one $i \in \{1, 2\}$
we have
\begin{equation} \label{addcond}
(r + 1) d_i - (r - 3)(g_i - 1) - (r - 1)n \geq \begin{cases} 
2 & \text{if $d_i > g_i + r$;} \\
4 & \text{if $d_i = g_i + r$.}
\end{cases}
\end{equation}
Then
$C_1 \cup_\Gamma C_2 \to \pp^r$ is a BN-curve, provided it has nonnegative Brill--Noether number.

Furthermore, if both $C_i \to \pp^r$ are general in some component of the space of NNS-curves
passing through $\Gamma$, then $C_1 \cup_\Gamma C_2 \to \pp^r$ is an interior BN-curve.
\end{thm}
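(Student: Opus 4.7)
The plan is to show that $f \colon C_1 \cup_\Gamma C_2 \to \pp^r$ is an unobstructed point of a component $Z \subseteq \bar{M}_g(\pp^r, d)$ whose generic member is a map from a smooth curve with the required immersion, finite, or birational behavior, and that $Z \to \bar{M}_g$ is dominant. Since the glued map is visibly nondegenerate (each $C_i$ is), this will exhibit $f$ as a BN-curve.

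The heart of the argument is a cohomological calculation on the normal sheaf $N_f$. On each component, the appropriate normal sheaf satisfies $N_f|_{C_i} = N_{f_i}(\Gamma_i)$, while the standard node-smoothing sequence assembles $N_f$ globally from these restrictions together with node data. Since each $C_i$ is NNS and $\Gamma$ consists of general points, Theorem~\ref{cor14} translates into $H^1(N_{f_i}(-\Gamma_i)) = 0$; and the corresponding Euler characteristic
\[\chi(N_{f_i}(-\Gamma_i)) = (r+1)d_i - (r-3)(g_i-1) - (r-1)n\]
is precisely the left-hand side of~\eqref{addcond}. A diagram chase through the node sequence, invoking~\eqref{addcond} (or the linearly normal hypothesis) to supply enough sections on each side, then yields $H^1(N_f) = 0$.

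From here, standard deformation theory shows that $f$ is a smooth point of $\bar{M}_g(\pp^r, d)$ of expected dimension $\chi(N_f) = (r+1)d - (r-3)(g-1)$, and that a versal deformation simultaneously smooths all $n$ nodes of the source, defining the desired component $Z$. Because $\rho(d,g,r) \geq 0$ by hypothesis, $\dim Z$ is consistent with dominance over $\bar{M}_g$; to promote this to actual dominance I would check that the differential of $Z \to \bar{M}_g$ at $f$ is surjective, which again reduces to showing that the sections of $N_f$ guaranteed by~\eqref{addcond} realize arbitrary first-order deformations of the source curve $C_1 \cup_\Gamma C_2$.

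I expect the main difficulty to be the $H^1$-vanishing in the borderline cases. When only one of the $C_i$ carries slack under~\eqref{addcond}, the diagram chase must exploit that slack precisely, using the extra sections on $C_1$ to lift node-smoothing directions that $C_2$ alone cannot accommodate. The two-linearly-normal case is subtler still, because~\eqref{addcond} is not even posited there and the component normal bundles are just barely sufficient; one probably needs a bootstrap—for example, attaching an auxiliary rational tail, applying Theorem~\ref{cor14} to a modified curve, and then contracting back—to conclude. Finally, the low-$r$ cases require a short separate verification that the generic smoothing in $Z$ retains the finite or birational property inherited from the $f_i$, since the definition of BN-curve changes with $r$.
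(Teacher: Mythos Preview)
Your proposal has a genuine gap at the step where you pass from unobstructedness to dominance over $\bar{M}_g$. The vanishing $H^1(N_f) = 0$ does follow from the interpolation input along the lines you sketch, and it does show that $f$ is a smooth point of $\bar{M}_g(\pp^r,d)$ of the expected dimension. But this alone does \emph{not} imply that the component $Z$ containing $f$ dominates $\bar{M}_g$: being smooth of expected dimension is compatible with $Z$ mapping into a proper subvariety of $\bar{M}_g$. The dominance condition is governed by a different sheaf, the restricted tangent bundle $f^*T_{\pp^r}$, via the exact sequence $0 \to f^*T_{\pp^r} \to N_f \to \mathcal{E}xt^1(\Omega_C,\oo_C) \to 0$; surjectivity of the differential of $Z \to \bar{M}_g$ at $f$ amounts to $H^1(f^*T_{\pp^r}) = 0$. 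Verifying this on one component after twisting down by $\Gamma$ requires the inequality $(r+1)d_i - rg_i + r \geq rn$, which is strictly stronger than~\eqref{addcond} once $g_i$ is at all large. This stronger inequality is exactly the hypothesis of Theorem~\ref{main-sp}, and the paper remarks explicitly that extending the tangent-bundle computation to the weaker hypotheses of Theorem~\ref{main} ``seems hopeless.''

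The paper therefore proceeds entirely differently. Theorem~\ref{main-sp} (proved via $H^1(f^*T_{\pp^r}(-\Gamma)) = 0$) serves only as a base case, essentially covering $n \leq r+2$. The general case is handled by a double induction on $d_1+d_2$ and on $n$: one degenerates $C_1$ to a union $C_1' \cup_{\Gamma_0} C_1''$ (with $C_1''$ often a line or rational normal curve), checks that the specialization is an interior curve via $H^1(\mathcal{N}) = 0$, and then applies the inductive hypothesis first to $C_1'' \cup_{\Gamma''} C_2$ and then to $C_1' \cup_{\Gamma_0 \cup \Gamma'} (C_1'' \cup_{\Gamma''} C_2)$. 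A substantial case analysis (linearly normal versus not, small $g_i$, the exceptional triples $(d,g,r)$ from Theorem~\ref{cor14}) is needed to arrange that each inductive call lands either in Theorem~\ref{main-sp} or in a strictly smaller instance of Theorem~\ref{main}. Your ``bootstrap with an auxiliary rational tail'' is a gesture toward this, but the actual mechanism is this structured degeneration-and-induction, not a direct cohomology computation at the original $f$.
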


(Note that if $d_i = g_i + r$ for both $i \in \{1, 2\}$,
the condition that $C_1 \cup_\Gamma C_2 \to \pp^r$ has nonnegative Brill--Noether number prevents \eqref{nmax} from being an equality.)

\medskip

The restriction to NNS-curves (as opposed to BN-curves)
arises only due to the dependence of Theorem~\ref{main} on results
of \cite{aly} on the interpolation problem:

\begin{thm}[Corollary~1.4 of \cite{aly}] \label{cor14}
There exists an NNS-curve $C \to \pp^r$ of degree $d$ and genus $g$
to $\pp^r$ (with $d \geq g + r$),
passing through $n$ general points,
if and only if
\[\begin{cases}
(r - 1) n \leq (r + 1) d - (r - 3)(g - 1) & \text{if $(d, g, r) \notin \{(5, 2, 3), (7, 2, 5)\}$;} \\
\phantom{(r - 1)} n \leq 9 & \text{if $(d, g, r) \in \{(5, 2, 3), (7, 2, 5)\}$.} \\
\end{cases}\]
\end{thm}

If an analog of this result were known for all BN-curves,
the method developed in the present paper would apply
in that situation as well to prove an analog of Theorem~\ref{main}
for all BN-curves.

\medskip

The most basic approach --- used, for example, by Sernesi in \cite{sernesi} --- to proving cases of Conjecture~\ref{conjrbn}
is to calculate the fiber dimension of the map from the space of
stable maps to the moduli space of curves
at the given reducible curve, thereby showing it lies in a component dominating
the moduli space of curves and hence is a BN-curve.
Unfortunately, such an approach is extremely difficult for $n$ large --- and impossible to use
in variants where $f(\Gamma)$ is general in a linear space for $n$ large
(in which case the fiber dimension is usually provably incorrect).

Instead, we prove Theorem~\ref{main} by an inductive argument showing
that such curves lie in the same component as
\emph{another} curve which we know is a BN-curve by calculation of the fiber dimension.
Rather than finding an irreducible curve in the space of maps,
the key insight here is to draw
a ``broken arc'' (iteratively specialize and then deform)
in the space of stable maps, connecting these two points of the moduli space:

\begin{center}
\begin{tikzpicture}[scale=1.25]
\draw[thick] (0, 0) -- (0, 2) -- (5, 2) -- (5, 0) -- (0, 0);
\draw[thick] (3, 2) -- (4, 3) -- (6, 3) -- (5, 2);
\filldraw (0.5, 1.5) circle [radius=0.05];
\draw (0.5, 1.5) .. controls (1, 1.5) and (1, 1) .. (1, 0.5);
\draw (1, 0.5) .. controls (1, 1) and (1, 1.5) .. (1.5, 1.5);
\draw (1.5, 1.5) .. controls (2, 1.5) and (2, 1) .. (2, 0);
\draw (2, 0) .. controls (2, 1) and (2, 1.5) .. (2.5, 1.5);
\draw[dotted] (2.5, 1.5) -- (3.5, 1.5);
\draw (3.5, 1.5) .. controls (4, 1.5) and (4, 1) .. (4, 0.5);
\draw (4, 0.5) .. controls (4, 1) and (4, 1.5) .. (4.5, 1.5);
\filldraw (4.5, 1.5) circle [radius=0.05];
\draw[->] (-0.6, 1.5) -- (0.4, 1.5);
\draw (-1.5, 1.5) node[align=right]{want: \\ is BN-curve};
\draw[<-] (4.6, 1.5) -- (5.6, 1.5);
\draw (6.5, 1.5) node[align=left]{know: \\ is BN-curve};
\draw (5.75, 0.25) node{$\bar{M}_{g}(\pp^r, d)$};
\end{tikzpicture}
\end{center}

Provided we check the specializations are to smooth points of the space of stable maps,
this shows our given such reducible curve is in the same component as the other curve, and is thus
a BN-curve as desired.

To carry out this approach, we must first establish the base cases
for our induction --- i.e.\ we must show
that certain other curves where we can calculate
the fiber dimension (``know: is BN-curve'' in the above diagram) are BN-curves.
For this, we use results on interpolation for restricted tangent bundles
of general curves \cite{tan}, which we show in Section~\ref{sec:tangent} gives a tool
to study Conjecture~\ref{conjrbn} ``one component at a time''.
While the assumptions that both components have nonnegative Brill--Noether numbers
reference only one component at a time\ldots
\begin{align*}
(r + 1) d_1 - r g_1 - r(r + 1) &\geq 0 \\
(r + 1) d_2 - r g_2 - r(r + 1) &\geq 0 \\
\intertext{\ldots the condition that the union has nonnegative Brill--Noether number
references both components:}
(r + 1) (d_1 + d_2) - r(g_1 + g_2 + n - 1) - r(r + 1) &\geq 0.
\end{align*}
However, subtracting the final two conditions, we see
that the final condition follows from one involving only the first component:
\[(r + 1) d_1 - r g_1 + r \geq rn.\]
It is exactly in this regime that we can verify the base case of our inductive
argument using these techniques.
Since this step does not depend on results of \cite{aly}, it can be done even when the
components are special; for this reason we state it here as a separate theorem:

\begin{thm} \label{main-sp}
Let $C_i \to \pp^r$ (for $i \in \{1, 2\}$) be WBN-curves
of degree $d_i$ and genus $g_i$, which pass through
a set $\Gamma \subset \pp^r$ of $n \geq 1$ general points.
Suppose that, for at least one $i \in \{1, 2\}$, we have
\[(r + 1) d_i - r g_i + r \geq rn.\]
Then
$C_1 \cup_\Gamma C_2 \to \pp^r$ is a WBN-curve.

Furthermore, if both $C_i \to \pp^r$ are general in some component of the space of WBN-curves
passing through $\Gamma$, then $C_1 \cup_\Gamma C_2 \to \pp^r$ is an interior WBN-curve.
\end{thm}

These arcs are constructed by further specializing one of the components, say $C'$,
to a reducible curve $C'_1 \cup D_1'$; this results in a specialization of $C' \cup C''$ given by
\[(C'_1 \cup D_1') \cup C'' = C_1' \cup (D_1' \cup C'').\]
We then deform $D_1' \cup C''$ to a smooth curve $C_1''$.
Finally, we iterate this procedure, alternating between components
(next we would specialize $C_1''$ --- to a different reducible curve, not back to $D'_1 \cup C''$):

\begin{center}
\begin{tikzpicture}[scale=0.5]
\draw (0, -0.5) node{$C'$};
\draw (0, 1.5) node{$C''$};
\draw (1, 1) .. controls (1, 3) and (2, 3) .. (2, 1);
\draw (1, 1) .. controls (1, 0) .. (0, 0);
\draw (2, 1) .. controls (2, 0.5) and (2.5, 0.25) .. (3, 0);
\draw (3, 0) .. controls (3.5, -0.25) .. (4, 0);
\draw (4, 0) .. controls (4.5, 0.25) .. (5, 0);
\draw (4, 0) .. controls (4, -2) and (3, -2) .. (3, 0);
\draw (4, 0) .. controls (4, 1) .. (5, 1);
\draw (3, 0) .. controls (3, 0.5) and (2.5, 0.75) .. (2, 1);
\draw (2, 1) .. controls (1.5, 1.25) .. (1, 1);
\draw (1, 1) .. controls (0.5, 0.75) .. (0, 1);
\begin{scope}[shift={(5, -10)}]
\draw (0, -0.5) node{$C_1'$};
\draw (0, 1.5) node{$C''$};
\draw (5.5, 0) node{$D_1'$};
\draw (1, 1) .. controls (1, 3) and (2, 3) .. (2, 1);
\draw (1, 1) .. controls (1, 0) .. (0, 0);
\draw (2, 1) .. controls (2, -1) .. (2.5, -1);
\draw (1.5, 0) -- (5, 0);
\draw (4, 0) .. controls (4, -2) and (3, -2) .. (3, 0);
\draw (4, 0) .. controls (4, 1) .. (5, 1);
\draw (3, 0) .. controls (3, 0.5) and (2.5, 0.75) .. (2, 1);
\draw (2, 1) .. controls (1.5, 1.25) .. (1, 1);
\draw (1, 1) .. controls (0.5, 0.75) .. (0, 1);
\end{scope}
\begin{scope}[shift={(10, 0)}]
\draw (0, -0.5) node{$C_1'$};
\draw (0, 1.5) node{$C_1''$};
\draw (1, 1) .. controls (1, 3) and (2, 3) .. (2, 1);
\draw (1, 1) .. controls (1, 0) .. (0, 0);
\draw (2, 1) .. controls (2, -1) .. (2.5, -1);
\draw (3.5, 0.25) .. controls (3, 0.25) and (2.5, 0.75) .. (2, 1);
\draw (3.5, 0.25) .. controls (4, 0.25) and (4, 1) .. (5, 1);
\draw (5, 1) .. controls (6, 1) and (6, 0) .. (5, 0);
\draw (5, 0) .. controls (4, 0) and (4, -3) .. (3, -1);
\draw (3, -1) .. controls (2.75, -0.5) and (2.5, -0.25) .. (2, 0);
\draw (2, 0) .. controls (1.75, 0.125) .. (1.5, 0.125);
\draw (2, 1) .. controls (1.5, 1.25) .. (1, 1);
\draw (1, 1) .. controls (0.5, 0.75) .. (0, 1);
\end{scope}
\begin{scope}[shift={(15, -10)}]
\draw (0, -0.5) node{$C_1'$};
\draw (0, 1.5) node{$D_2''$};
\draw (6.4, 0.5) node{$C_2''$};
\draw (1, 1) .. controls (1, 3) and (2, 3) .. (2, 1);
\draw (1, 1) .. controls (1, 0) .. (0, 0);
\draw (2, 1) .. controls (2, -1) .. (2.5, -1);
\draw (3.5, 0.25) .. controls (2.5, 0.25) and (3.5, 2.5) .. (2.5, 2.5);
\draw (3.5, 1) -- (0, 1);
\draw (3.5, 0.25) .. controls (4, 0.25) and (4, 1) .. (5, 1);
\draw (5, 1) .. controls (6, 1) and (6, 0) .. (5, 0);
\draw (5, 0) .. controls (4, 0) and (4, -3) .. (3, -1);
\draw (3, -1) .. controls (2.75, -0.5) and (2.5, -0.25) .. (2, 0);
\draw (2, 0) .. controls (1.75, 0.125) .. (1.5, 0.125);
\end{scope}
\begin{scope}[shift={(20, 0)}]
\draw (6.4, 0.5) node{$C_2''$};
\draw (-1.3, 0.5) node{$C_2'$};
\draw (1.5, 0.75) .. controls (2.5, 0.75) and (1.5, -1.5) .. (2.5, -1.5);
\draw (1.5, 0.75) .. controls (1, 0.75) and (1, 0) .. (0, 0);
\draw (0, 0) .. controls (-1, 0) and (-1, 1) .. (0, 1);
\draw (0, 1) .. controls (1, 1) and (1, 4) .. (2, 2);
\draw (2, 2) .. controls (2.25, 1.5) and (2.5, 1.25) .. (3, 1);
\draw (3, 1) .. controls (3.25, 0.875) .. (3.5, 0.875);
\draw (3.5, 0.25) .. controls (2.5, 0.25) and (3.5, 2.5) .. (2.5, 2.5);
\draw (3.5, 0.25) .. controls (4, 0.25) and (4, 1) .. (5, 1);
\draw (5, 1) .. controls (6, 1) and (6, 0) .. (5, 0);
\draw (5, 0) .. controls (4, 0) and (4, -3) .. (3, -1);
\draw (3, -1) .. controls (2.75, -0.5) and (2.5, -0.25) .. (2, 0);
\draw (2, 0) .. controls (1.75, 0.125) .. (1.5, 0.125);
\end{scope}
\draw[dashed] (5.5, -1) .. controls (6.5, -1.5) and (7.5, -2) .. (7.5, -7);
\draw[dashed] (9.5, -1) .. controls (8.5, -1.5) and (7.5, -2) .. (7.5, -7);
\begin{scope}[shift={(10, 0)}]
\draw[dashed] (5.5, -1) .. controls (6.5, -1.5) and (7.5, -2) .. (7.5, -7);
\draw[dashed] (9.5, -1) .. controls (8.5, -1.5) and (7.5, -2) .. (7.5, -7);
\end{scope}
\draw (27, -5) node{$\cdots$};
\end{tikzpicture}
\end{center}

Note that even if 
$C'$ and $C''$ do not meet at any additional
point not in $\Gamma$, and have distinct tangent directions at the points of $\Gamma$ --- so that $f$ is the natural immersion of the scheme-theoretic union --- this broken arc may still not make sense in the Hilbert scheme
compactification, so it is important to
work in the space of stable maps even in this case.

\medskip

While almost all of the remainder of the paper is consumed by the proof of
our main Theorem~\ref{main},
we also study several special cases of a variant of Conjecture~\ref{conjrbn} where
$f(\Gamma)$ is general in a hyperplane or other linear subspace.
A more systematic study of this variant --- using techniques developed in the present paper --- is
deferred to another paper \cite{rbn2},
since additional results on the interpolation problem obtained in a sequence of papers
by the author and others \cite{quadrics, p4, ibe, vogt} are required,
and this sequence of papers uses results of the present paper.
The special cases examined in the present paper
are chosen for a combination of the following reasons:
\begin{enumerate}
\item They can easily be obtained with the methods developed here to prove Theorem~\ref{main}.
\item They have applications to several other geometric problems, including
to the above-mentioned sequence of papers on the interpolation problem,
and to the Maximal Rank Conjecture.
\end{enumerate}

In this direction, we first note that the argument used to establish
Theorem~\ref{main-sp} also establishes, with no additional work, the following
special case of Conjecture~\ref{conjrbn} where $f(\Gamma)$ is general in a hyperplane:

\begin{thm} \label{main-hyp}
Let $H \subset \pp^r$ be a hyperplane,
and let $C \to \pp^r$ be a WBN-curve and $D \to H \hookrightarrow \pp^r$ be an NS-curve,
with $C$ transverse to $H$, 
which pass through
a set $\Gamma \subset H$ of $n \geq 1$ general points in $H$.
Suppose that, writing $d'$ and $g'$
for the degree and genus of $C$,
we have
\[d' - rg' - 1 \geq 0 \tand (r + 1) d' - r g' + r \geq rn.\]
Then
$C \cup_\Gamma D \to \pp^r$ is a WBN-curve.

Furthermore, if both $C \to \pp^r$ and $D \to H$
are general in some component of the space of WBN-curves (respectively NS-curves)
passing through $\Gamma$, then $C \cup_\Gamma D \to \pp^r$ is an interior WBN-curve.
\end{thm}

We next consider the variant of Conjecture~\ref{conjrbn} where $f(\Gamma)$ is a set of
general points in a linear space $\Lambda \subset \pp^r$ of smaller dimension $a \leq r$.
As discussed earlier, the difficulty of Conjecture~\ref{conjrbn} rises with $n$,
so the easiest nontrivial case here is when $n = a + 2$ (if $n = a + 1$
the points are general in $\pp^r$).
In this setting, we consider the case when one component is a nondegenerate
curve of degree $d$ and genus $g$,
and the other is a rational normal curve contained in $\Lambda$.
If the first component is general, then we must be in the range
where general curves admit $n$-secant $a$-planes; if $n = a + 2$,
this condition is exactly $a \geq \frac{r - 2}{2}$.
Moreover, we must be in the range where the reducible curve
has nonnegative Brill--Noether number;
if $n = a + 2$,
this condition is exactly $a \geq r - \rho(d, g, r)$.
In short, a necessary condition is
\[\max\left(\frac{r - 2}{2}, r - \rho(d, g, r)\right) \leq a \leq r.\]
The following result essentially shows this condition is sufficient:

\begin{thm} \label{small-mid}
Let $f \colon C \to \pp^r$ be a general BN-curve of degree $d$ and genus $g$,
and $a$ be an integer with
\[\max\left(\frac{r - 2}{2}, r - \rho(d, g, r)\right) \leq a \leq r.\]
Then there exists an interior BN-curve $\hat{f} \colon C \cup_\Gamma \pp^1 \to \pp^r$
with $\# \Gamma = a + 2$ and $\hat{f}|_{\pp^1}$ of degree $a$, such that
$\hat{f} |_C = f$, and such that the image of $2\Gamma \subset C$ under $f$ spans $\pp^r$.
\end{thm}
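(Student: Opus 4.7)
The two hypotheses on $a$ play complementary roles. On the one hand, $a \ge r - \rho(d,g,r)$ is equivalent to $\rho(d+a,g+a+1,r) = \rho(d,g,r) + a - r \ge 0$, which is a prerequisite for $\hat{f}$ to be a BN-curve at all. On the other hand, $a \ge (r-2)/2$ is what will permit the construction of the attachment set $\Gamma$. My plan is to build $\hat{f}$ explicitly and then verify, via a parameter count followed by a smoothing argument, that it lies in the closure of the BN-locus.

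The first step is to find $\Gamma \subset C$. Since $\hat{f}|_{\pp^1}$ has degree $a$, its image lies in some $a$-plane $\Pi \subset \pp^r$, so $f(\Gamma) \subset \Pi$ is necessary. I would argue existence by parameter count: $(a+2)$-tuples on $C$ form an $(a+2)$-dimensional family, and ``$f(\Gamma)$ lies in a common $a$-plane'' is a codimension-$(r-a)$ condition in $(\pp^r)^{a+2}$ (a generic $(a+2)$-tuple in $\pp^r$ spans an $(a+1)$-plane, and collapsing one dimension costs $r-a$ parameters). This leaves a $(2a+2-r)$-dimensional family of valid $\Gamma$, nonempty exactly when $a \ge (r-2)/2$. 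For generic such $\Gamma$ the span $\Pi := \langle f(\Gamma) \rangle$ has dimension exactly $a$; I would then take $\hat{f}|_{\pp^1}$ to parametrize a rational normal curve in $\Pi$ through $f(\Gamma)$, which exists since RNCs in $\pp^a$ through $a+2$ general points form an $(a-1)$-dimensional family. The spanning assertion $\langle f(2\Gamma) \rangle = \pp^r$ follows from the same bound $a + 2 \ge r - a$: the $a$-plane $\Pi$ together with the $a+2$ generically-directed tangent lines $T_p f(C)$ for $p \in \Gamma$ pick up the remaining $r-a$ dimensions.

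It remains to show $\hat{f}\colon C \cup_\Gamma \pp^1 \to \pp^r$ lies in the closure of the BN-component of $\bar{M}_{g+a+1}(\pp^r, d+a)$. A direct count combining the moduli of $C$, of $\Gamma$, and of the RNC in $\Pi$ shows that the locus of such reducible stable maps has dimension equal to the expected dimension of the codimension-$(a+2)$ boundary stratum. The main obstacle is then the smoothing step, which I would handle by showing $h^1(N_{\hat{f}}) = 0$. On the $\pp^1$ component, using that the normal bundle of an RNC embedded linearly in $\pp^r$ via $\pp^a \subset \pp^r$ is $\O(a+2)^{\oplus(a-1)} \oplus \O(a)^{\oplus(r-a)}$, the twist $N_{\hat{f}|_{\pp^1}}(\Gamma) = \O(2a+4)^{\oplus(a-1)} \oplus \O(2a+2)^{\oplus(r-a)}$ manifestly has vanishing $H^1$. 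On the $C$ component, the vanishing of $h^1(N_f(\Gamma))$ for a general BN-curve $f$ and generic coplanar $\Gamma$ is the technical crux, and should follow from known interpolation results for normal bundles of general BN-curves applied to our specific $\Gamma$. Granting these vanishings, the standard gluing sequence yields $h^1(N_{\hat{f}}) = 0$, so $\hat{f}$ is a smooth point of a component of $\bar{M}_{g+a+1}(\pp^r, d+a)$ of the expected dimension; generality of $C$ forces this component to dominate $\bar{M}_{g+a+1}$, and nondegeneracy is inherited from $C$, identifying it as the BN-component and finishing the proof.
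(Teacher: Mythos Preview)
Your approach contains two genuine gaps that prevent it from going through as written.

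First, the normal-bundle gluing is not set up correctly. The exact sequence on $C \cup_\Gamma \pp^1$ used to control $H^1(N_{\hat f})$ is
\[0 \to N_{\hat f}|_{\pp^1}(-\Gamma) \to N_{\hat f} \to N_{\hat f}|_C \to 0,\]
so the relevant objects carry a \emph{negative} twist on one component and involve the \emph{restrictions} $N_{\hat f}|_{\pp^1}$ and $N_{\hat f}|_C$, which differ from $N_{\hat f|_{\pp^1}}$ and $N_f$ by rank-one elementary modifications at $\Gamma$ (not by a full twist $\otimes\,\oo(\Gamma)$). If you try to reduce to the unmodified bundle on the rational component, you find $N_{\hat f|_{\pp^1}}(-\Gamma) \simeq \oo^{\,a-1}\oplus\oo(-2)^{\,r-a}$, whose $H^1$ has dimension $r-a$, which is nonzero precisely in the interesting range $a<r$. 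The paper handles this in Lemma~\ref{cohom-small-mid} by keeping exactly $r-a$ of the node modifications---those whose tangent directions, together with the $a$-plane, span $\pp^r$---and then splitting off the normal directions to the $a$-plane one hyperplane at a time; this is where the hypothesis that $f(2\Gamma)$ spans $\pp^r$ actually enters the cohomological argument.

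Second, and more seriously, the implication ``$h^1(N_{\hat f})=0 \Rightarrow \hat f$ is a BN-curve'' does not hold. Vanishing of $H^1(N_{\hat f})$ shows only that $[\hat f]$ is a smooth point of the Kontsevich space, hence lies in a unique component of the expected dimension (what the paper calls an \emph{interior curve}, Definition~\ref{def:defbn}). It says nothing about whether that component dominates $\bar M_{g+a+1}$; the claim that ``generality of $C$ forces'' this is not an argument, since $C$ has genus $g$, not $g+a+1$. The paper's criterion for WBN-ness (Lemma~\ref{wbn}) is vanishing of $H^1(\hat f^*T_{\pp^r})$, which is a different condition, and the paper never attempts to verify it directly for $\hat f$. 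Instead, the proof of Theorem~\ref{small-mid} is an induction on $d$ that degenerates $f$ itself and appeals to the already-proven Theorems~\ref{main} and~\ref{main-sp} to certify that each intermediate union is a BN-curve; the base case is an explicit construction of $\hat f$ as the canonical map of a reducible genus-$(r+1)$ curve. Your direct-construction strategy would need an independent argument (e.g.\ along the lines of Lemma~\ref{tglue}) that the component containing $\hat f$ dominates moduli, and that argument is missing.
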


Such degenerations are used in \cite{ibe} to give
bounds on the number of general points contained in a hyperplane section of a general BN-curve:

\begin{thm*}[Theorem~1.5 of \cite{ibe}]
The hyperplane section of a general BN-curve of degree $d$ and genus $g$ in $\pp^r$
contains $d - n$ general points (with $0 \leq n \leq d$) if
\[(2r - 3)(d + 1) - (r - 2)^2 (g - n) - 2r^2 + 3r - 9 \geq 0.\]
\end{thm*}

Finally, we consider the case where
$f(\Gamma)$ is general in a hyperplane, with one component $f|_C$ nondegenerate, and the other component $f|_D$
contained in the hyperplane, when $n$ is small.
This will form the base case for our later systematic study of this variant of Conjecture~\ref{conjrbn} in \cite{rbn2}
mentioned above, as well as having direct application (via Corollary~\ref{cor:small-hyp}) to cases of the Maximal
Rank Conjecture with small genus in \cite{mrc}.

Since $f|_D$ may be both special and degenerate,
it may not be possible to deform $f$ to a
map from a smooth curve (let alone one from a smooth curve of general moduli!).
As in the proof of Corollary~4.3
of \cite{hh} mutatis mutandis,
$f$ admits a first-order deformation away from the locus of curves with reducible
source if and only if:
\begin{equation} \label{nC}
n + d'' - g'' - r = n - (\dim H^1 (N_{f|_D}) + 1) \geq 0;
\end{equation}
we therefore focus on cases where this inequality holds.
The following result establishes this variant of Conjecture~\ref{conjrbn} when $n$ is small
($n \leq r + 2$):

\begin{thm} \label{small-hyp}
Let $H \subset \pp^r$ be a hyperplane, $\Gamma \subset H$
be a set of $n \geq 1$ general points,
$f \colon C \to \pp^r$ be a WBN-curve passing through $\Gamma$, and $g \colon D \to H$
be a BN-curve passing through $\Gamma$, with 
$f$ transverse to $H$ along $\Gamma$.
Write $g''$ for the genus of $D$ and $d''$ for the degree of $g$.
If
\[n \leq r + 2 \tand d'' + n \geq g'' + r,\]
then
$C \cup_\Gamma D \to \pp^r$ is a BN-curve provided that it
has nonnegative Brill--Noether number.

Furthermore, if both $C \to \pp^r$ and $D \to H$
are general in some component of the space of WBN-curves (respectively BN-curves)
passing through $\Gamma$, then $C \cup_\Gamma D \to \pp^r$ is an interior BN-curve.
\end{thm}

\begin{rem} If $g$ is instead a WBN-curve but not a BN-curve,
then $C \cup_\Gamma D \to \pp^r$ is a WBN-curve
by Theorem~\ref{main-sp}.
\end{rem}

Finally, we state an (easy) corollary of the above theorem,
where $C$ is replaced with the union of a WBN-curve and disjoint lines.
Such degenerations are useful in the proof of the Maximal Rank Conjecture
when $g$ is small (a regime in which degeneration to any reducible
curve with only a bounded number of components seems futile for numerical reasons).

\begin{cor} \label{cor:small-hyp}
Let $H \subset \pp^r$ be a hyperplane, $\Gamma \subset H$
be a set of $n \geq 1$ general points, $\{p_1, p_2, \ldots, p_m\} \subset H$
be an independently general set of $m \geq 0$ points,
$f \colon C \to \pp^r$ be a WBN-curve passing through $\Gamma$, $g \colon D \to H$
be a BN-curve passing through $\Gamma \cup \{p_1, p_2, \ldots, p_m\}$, and $h_i \colon \pp^1 \to \pp^r$
be lines passing through $p_i$ (for $1 \leq i \leq m$), with $f$ transverse to $H$ along $\Gamma$
and $h_i$ transverse to $H$ along $p_i$.
Write $g''$ for the genus of $D$ and $d''$ for the degree of $g$.
If
\[n \leq r + 2 \tand d'' + n + m \geq g'' + r,\]
then
$C \cup_\Gamma D \cup_{p_1} \pp^1 \cup_{p_2} \pp^1 \cdots \cup_{p_m} \pp^1 \to \pp^r$ is a BN-curve provided that it
has nonnegative Brill--Noether number.

Furthermore, if $C \to \pp^r$ and $D \to H$ and all $h_i \colon \pp^1 \to \pp^r$
are general in some component of the space of WBN-curves (respectively BN-curves, respectively lines)
passing through $\Gamma$ (respectively $\Gamma \cup \{p_1, p_2, \ldots, p_m\}$, respectively $p_i$),
then $C \cup_\Gamma D \cup_{p_1} \pp^1 \cup_{p_2} \pp^1 \cdots \cup_{p_m} \pp^1 \to \pp^r$ is an interior BN-curve.
\end{cor}

As mentioned earlier,
the results and techniques
of this paper have already found application to several geometric
problems via degeneration arguments. These
include various generalizations of Theorem~\ref{cor14} discussed above
in \cite{quadrics, p4, ibe, vogt},
as well as the proof of Severi's 1915 Maximal Rank Conjecture in \cite{mrc}:

\begin{thm*}[Maximal Rank Conjecture \cite{mrc}]
If $C \subset \pp^r$ is a general BN-curve ($r \geq 3$), the restriction maps
\[H^0(\oo_{\pp^r}(k)) \to H^0(\oo_C(k))\]
are of maximal rank (i.e.\ either injective or surjective).

Or equivalently, the dimension of the space of polynomials
of degree $k$ which vanish on $C$ is given by
\[\begin{cases}
\binom{r + k}{k} - (kd + 1 - g) & \text{if $kd + 1 - g \leq \binom{r + k}{k}$ and $k \geq 2$;} \\
0 & \text{otherwise.}
\end{cases}\]
\end{thm*}

\paragraph{Note 1:} Throughout this paper, we work over an algebraically
closed field of characteristic zero.

\paragraph{Note 2:} Since any specialization of a BN-curve is a BN-curve,
we may suppose all curves $C_i \to \pp^r$, $f$, $g$, etc.\ appearing in our theorem statements above
are general in some component of the space of such curves
(e.g.\ in Theorem~\ref{small-hyp}, we may suppose $g$ is general in some component
of the space of curves to $H$ passing through $\Gamma$).
We will show in this case that the resulting stable maps from reducible
curves are interior BN-curves or WBN-curves as indicated.

\subsection*{Acknowledgements}

The author would like to thank Joe Harris for
his guidance throughout this research.
The author would also like to thank
Izzet Coskun, Gavril Farkas, Davesh Maulik,
Dmitry Vaintrob, Isabel Vogt,
and members of the Harvard and MIT mathematics departments,
for helpful conversations;
and to acknowledge the generous
support both of the Fannie and John Hertz Foundation,
and of the Department of Defense
(NDSEG fellowship).

\section{WBN-curves and the Restricted Tangent Bundle \label{sec:tangent}}

In this section we give a criterion for $f \colon  C \to \pp^r$
to be a WBN-curve, in terms of the \emph{restricted tangent bundle}
$f^* T_{\pp^r}$ (also sometimes called the ``Lazarsfeld bundle'' due its usage
by Lazarsfeld in the study of syzygies of curves \cite{laz}).
We then use this condition to prove Theorems~\ref{main-sp} and~\ref{main-hyp}.

\begin{lm} \label{exp-dim}
Any component of $\bar{M}_{g, n}(\pp^r, d)$ whose general member is a
WBN-curve is generically reduced of the expected dimension $(r + 1)d - (r - 3)(g - 1) + n$.
\end{lm}
\begin{proof}
Write $r'$
for the dimension of the linear span of the general member $f \colon C \to \pp^r$.
Our component is then birational to a bundle
over the Grassmanian $\operatorname{Gr}(r', r)$,
whose generic fiber is the component $N \subseteq \bar{M}_{g, n}(\pp^{r'}, d)$
corresponding to BN-curves.

By results of Griffiths, Harris, and Gieseker \cite{gp, bn},
the component $N$ is generically reduced of the expected dimension
$(r' + 1)d - (r' - 3)(g - 1) + n$.

If $r' = r$ this completes the proof. Otherwise, since $f$ is general,
we claim it must be linearly normal; indeed, it suffices to show that if $f$
was not linearly normal, it admits a deformation whose linear span is $(r' + 1)$-dimensional.
For this, choose coordinates $[x_0 : x_1 : \cdots : x_r]$ on $\pp^r$ so the
linear span of $f$ is defined by $x_{r' + 1} = x_{r' + 2} = \cdots = x_r = 0$, and
write $f$ as $[f_0 : f_1 : \cdots : f_k : 0 : \cdots : 0]$
for $f_i \in H^0(\mathcal{L})$ for some line bundle $\mathcal{L}$.
If $f$ were not linearly normal, we could pick some $f_{k + 1} \in H^0(\mathcal{L}) \smallsetminus \langle f_1, f_2, \ldots, f_k \rangle$;
then $[f_0 : f_1 : \cdots : f_k : \lambda f_{k + 1} : 0 \cdots 0]$ for $\lambda$ generic provides
the required deformation of $f$.
We conclude $f$ is linearly normal as claimed.

The definition of WBN-curves then implies $d = g + r'$,
so our component is generically reduced of dimension
\begin{align*}
\dim \operatorname{Gr}(r', r) + \dim N &= (r' + 1)(r - r') + [(r' + 1)d - (r' - 3)(g - 1) + n] \\
&= (d - g + 1)(r - d + g) + (d - g + 1)d - (d - g - 3)(g - 1) + n\\
&= (r + 1) d - (r - 3)(g - 1) + n,
\end{align*}
which is the expected dimension.
\end{proof}

\begin{lm} \label{wbn}
Let $f \colon C \to \pp^r$ be a curve, and $\Gamma \subset C$ a (possibly empty)
set of points whose images under $f$ are linearly independent.
If $f$ is a general WBN-curve, $H^1(f^* T_{\pp^r}(-\Gamma)) = 0$.
Conversely, if $H^1(f^* T_{\pp^r}(-\Gamma)) = 0$, then $f$ is an interior WBN-curve.
\end{lm}
\begin{proof}
First we consider the case when $\Gamma = \emptyset$.
Assume first that f is a general WBN curve.
By Lemma~\ref{exp-dim}, the component of $\bar{M}_g(\pp^r, d)$
containing $[f]$ is smooth at $[f]$ of the expected dimension.
Since this component by definition dominates $\bar{M}_g$,
the vertical tangent space $H^0(f^* T_{\pp^r})$ of
$\bar{M}_g(\pp^r, d) \to \bar{M}_g$ at $[f]$ is of the expected dimension
$\chi(f^* T_{\pp^r})$; thus $H^1(f^* T_{\pp^r}) = 0$.

For the converse, $H^1(f^* T_{\pp^r}) = 0$ implies the map
$\bar{M}_g(\pp^r, d) \to \bar{M}_g$ is smooth at $[f]$, so $[f]$ lies in a unique component
of $\bar{M}_g(\pp^r, d)$, which dominates $\bar{M}_g$.
If $f$ fails to be nondegenerate, then writing $\Lambda$ for its linear span, the exact sequence
\[0 \to f^* T_\Lambda \to f^* T_{\pp^r} \to f^* N_\Lambda \simeq \oo_C(1)^{\operatorname{codim} \Lambda} \to 0\]
implies $H^1(\oo_C(1)) = 0$.

Finally, we provide
an isomorphism $H^1(f^* T_{\pp^r}(-\Gamma)) \simeq H^1(f^* T_{\pp^r})$,
which reduces the general case to the case $\Gamma = \emptyset$.
For this, we use the exact sequence
\[0 \to f^* T_{\pp^r}(-\Gamma) \to f^* T_{\pp^r} \to f^* T_{\pp^r}|_\Gamma \simeq T_{\pp^r}|_{f(\Gamma)} \to 0.\]
The composition $H^0(T_{\pp^r}) \to H^0(f^* T_{\pp^r}) \to H^0(T_{\pp^r}|_{f(\Gamma)})$ is surjective (because $f(\Gamma)$
is a collection of linearly independent points);
consequently the restriction map
$H^0(f^* T_{\pp^r}) \to H^0(T_{\pp^r}|_{f(\Gamma)})$ is surjective.
Moreover $T_{\pp^r}|_{f(\Gamma)}$ is punctual, so $H^1(T_{\pp^r}|_{f(\Gamma)}) = 0$.
The long exact sequence in cohomology for the above sequence thus gives the desired isomorphism.
\end{proof}

\begin{lm} \label{tglue}
Let $f \colon C_1 \cup_\Gamma C_2 \to \pp^r$ be a reducible curve,
with $H^1(f|_{C_1}^* T_{\pp^r} (-\Gamma)) = 0$
and $H^1(f|_{C_2}^* T_{\pp^r}) = 0$.
Then $f$ is an interior WBN-curve.
\end{lm}
\begin{proof}
Our assumptions imply, via the exact sequence
\[0 \to f|_{C_1}^* T_{\pp^r}(-\Gamma) \to f^* T_{\pp^r} \to f|_{C_2}^* T_{\pp^r} \to 0,\]
that $H^1(f^* T_{\pp^r}) = 0$; consequently, $f$
is an interior WBN-curve by Lemma~\ref{wbn}.
\end{proof}

\begin{proof}[Proof of Theorem~\ref{main-sp}.]
As mentioned in the introduction, we suppose $C_i$
is general in some component of $\bar{M}_{g_i}(\pp^r, d_i)$,
for both $i \in \{1, 2\}$; and also that
$(r + 1) d_1 - r g_1 + r \geq rn$. Note that
$H^1(f_2^* T_{\pp^r}) = 0$ by Lemma~\ref{wbn}.
By Lemma~\ref{tglue}, it therefore suffices to show
$H^1(f_1^* T_{\pp^r}(-\Gamma)) = 0$.

If $C_1$ is degenerate, then $\Gamma$ is linearly independent
(it is a general set of points which does not span $\pp^r$);
the result thus follows from Lemma~\ref{wbn}.
If $C_1$ is nondegenerate, the result follows from
Theorem~1.2 of \cite{tan}.
\end{proof}

\begin{proof}[Proof of Theorem~\ref{main-hyp}.]
As mentioned in the introduction, we suppose $C$
is general in some component of $\bar{M}_{g'}(\pp^r, d')$,
and $D$ is general in some component of $\bar{M}_{g''}(H, d'')$
(for $d''$ and $g''$ the degree and genus of $D$).
Note that
$H^1(g^* T_{\pp^r}) = 0$ by Lemma~\ref{wbn};
by Lemma~\ref{tglue}, it therefore suffices to show
$H^1(f^* T_{\pp^r}(-\Gamma)) = 0$.

As in the proof of Theorem~\ref{main-sp}, this follows
from Lemma~\ref{wbn} when $f$ is degenerate,
and from Theorem~1.4 of~\cite{tan} when $f$ is nondegenerate.
\end{proof}

\section{WBN-curves and Deformation Theory \label{sec:def}}

In this section, we prove the key lemmas which enable us to use
degeneration in the proof of our theorems.

We begin with a gentle reminder on the deformation theory of maps (c.f.\ Section~3.4 of~\cite{sernesi}): If
$f \colon X \to Y$ is an unramified morphism between lci schemes,
then first-order deformations of $f$ and obstructions to lifting them lie
in the cohomology groups $H^0(N_f)$ and $H^1(N_f)$
respectively of the \emph{normal bundle}:
\[N_f := \hom(N_f^\vee, \oo_X),\]
where $N_f^\vee$ is the \emph{conormal bundle}:
\[\ker(f^* \Omega_Y \to \Omega_X).\]

When $f$ is no longer unramified, the same holds provided we
work in the derived category $\db(\coh(X))$
(for the case of $X$ a curve, which is the only case we shall use, c.f.\ Section~2 of~\cite{ghs}):
Namely, we define the \emph{normal complex}
\[\mathcal{N}_f := \underline{\rhom}(\mathcal{N}_f^\vee, \oo_X),\]
where $\mathcal{N}_f^\vee$ denotes the \emph{conormal complex}:
\[f^* \Omega_Y \to \Omega_X\]
(with $f^* \Omega_Y$ in degree $0$ and $\Omega_X$ in degree $1$).
First-order deformations of $f$ and obstructions to lifting them then lie
in the hypercohomology groups $\mathbb{H}^0(\mathcal{N}_f)$ and $\mathbb{H}^1(\mathcal{N}_f)$.

The reader unaccustomed to derived categories
is advised to assume all maps are unramified in this section --- in which
case the same proofs work with ``normal complex'' replaced by ``normal bundle'' et cetra --- and
then take the result when the necessary maps may not be unramified on faith.

\begin{lm} \label{lm:inter}
Let $S \subset \pp^r$ be a hypersurface, and $\Gamma \subset S$
and $\Delta \subset \pp^r$ be general (possibly empty) sets of points.
Take $f \colon C \to \pp^r$ to be
general in some component of the space of WBN-curves
which are transverse to $S$ and pass through
$\Gamma \cup \Delta$.
Then $\mathbb{H}^1(\mathcal{N}_f(-\Gamma - \Delta)) = 0$
(where by abuse of notation we write $\Gamma \subset C$
and $\Delta \subset C$ for sets of points
mapping injectively under $f$ onto $\Gamma$ and $\Delta$ respectively).

In particular, when the general such $f$ is unramified
(which is the case when the dimension of the linear span of the image of $f$
is at least $2$; c.f.\ Theorem~2 of~\cite{bnc}),
then $H^1(N_f(-\Gamma-\Delta)) = 0$.
\end{lm}
\begin{proof}
Write $n = \# \Gamma$ and $m = \# \Delta$.
The moduli space of such triples $(f, \Gamma, \Delta)$
is then an \'etale cover of the component of
$\bar{M}_{g, m}(\pp^r, d)$ containing $(f, \Delta)$.
It is thus generically reduced of the expected dimension by Lemma~\ref{exp-dim}.
By assumption, this component dominates $S^n \times (\pp^r)^m$,
so the vertical tangent space $\mathbb{H}^0(\mathcal{N}_f(-\Gamma-\Delta))$
is of the expected dimension $\chi(\mathcal{N}_f(-\Gamma-\Delta))$.

By examination, the only nonvanishing cohomology groups of $\mathcal{N}_f^\vee$
are $H^0(\mathcal{N}_f^\vee)$ and $H^1(\mathcal{N}_f^\vee)$.
Moreover, since $f$ is a general WBN-curve, $f$ is generically unramified;
this implies $H^1(\mathcal{N}_f^\vee)$ is punctual.
Using the spectral sequence
\[\ext^i(H^j(\mathcal{N}_f^\vee), \oo(-\Gamma-\Delta)) \Rightarrow \mathbb{H}^{i - j}(\mathcal{N}_f(-\Gamma-\Delta)),\]
we conclude
$\mathbb{H}^k(\mathcal{N}_f(-\Gamma-\Delta)) = 0$ for $k \notin \{0, 1\}$.
Since $\dim \mathbb{H}^0(\mathcal{N}_f(-\Gamma-\Delta)) = \chi(\mathcal{N}_f(-\Gamma-\Delta))$
from above, this implies
$\mathbb{H}^1(\mathcal{N}_f(-\Gamma-\Delta)) = 0$
as desired.
\end{proof}

\begin{lm} \label{union-res}
Let $f \colon X \cup_\Gamma Y \to \pp^r$ be a reducible curve,
and $\mathcal{F}$ be a flat sheaf on $X$. Let
$\Delta \subseteq \Gamma$ be any subset, and write
$g \colon X \cup_\Delta Y \to \pp^r$.
If $\mathbb{H}^1(\mathcal{N}_g|_X \otimes \mathcal{F}) = 0$, then
$\mathbb{H}^1(\mathcal{N}_f|_X \otimes \mathcal{F}) = 0$.
In particular, taking $\Delta = \emptyset$,
if $\mathbb{H}^1(\mathcal{N}_{(f|_X)} \otimes \mathcal{F}) = 0$, then
$\mathbb{H}^1((\mathcal{N}_f)|_X \otimes \mathcal{F}) = 0$.
\end{lm}
\begin{proof}
Write $\bar{\Delta} = \Gamma \smallsetminus \Delta$.
The following diagram with exact rows
\[\begin{CD}
0 @>>> 0 @>>> f^* \Omega_{\pp^r}|_X @= g^* \Omega_{\pp^r}|_X @>>> 0 \\
@. @VVV @VVV @VVV @. \\
0 @>>> \oo_{\bar{\Delta}} @>>> \Omega_{X \cup_\Gamma Y}|_X @>>> \Omega_{X \cup_\Delta Y}|_X @>>> 0
\end{CD}\]
gives an exact triangle in $\db(\coh(X))$:
\[\oo_{\bar{\Delta}}[1] \to \mathcal{N}_f^\vee|_X \to \mathcal{N}_{g}^\vee|_X \to.\]
Upon applying $\underline{\rhom}(-, \oo_X)$, twisting by $\mathcal{F}$,
and taking hypercohomology, we get a long exact sequence
\[ \cdots \to \mathbb{H}^1(\mathcal{N}_g|_X \otimes \mathcal{F}) \to \mathbb{H}^1(\mathcal{N}_f|_X \otimes \mathcal{F}) \to \mathbb{H}^1(\underline{\rhom}(\oo_{\bar{\Delta}}[1], \oo_X) \otimes \mathcal{F}) \to \cdots.\]
It thus remains to show
$\mathbb{H}^1(\underline{\rhom}(\oo_{\bar{\Delta}}[1], \oo_X) \otimes \mathcal{F}) = 0$.
But
\[\mathbb{H}^1(\underline{\rhom}(\oo_{\bar{\Delta}}[1], \oo_X) \otimes \mathcal{F}) \simeq \mathbb{H}^1(\underline{\rhom}(\oo_{\bar{\Delta}}[1], \mathcal{F})) \simeq \ext^2(\oo_{\bar{\Delta}}, \mathcal{F}) = 0. \qedhere\]
\end{proof}

\begin{lm} \label{lm:union-smooth} Let
$f \colon X \cup_\Gamma Y \to \pp^r$ be a reducible curve, and $\mathcal{F}$ be a flat sheaf on $X \cup_\Gamma Y$.
If $\mathbb{H}^1(\mathcal{N}_f(-\Gamma) \otimes \mathcal{F}|_X) = \mathbb{H}^1(\mathcal{N}_f \otimes \mathcal{F}|_Y) = 0$,
then $\mathbb{H}^1(\mathcal{N}_f \otimes \mathcal{F}) = 0$.
In particular, taking $\mathcal{F} = \oo$,
if $\mathbb{H}^1(\mathcal{N}_f(-\Gamma)|_X) = \mathbb{H}^1(\mathcal{N}_f|_Y) = 0$,
then $\mathbb{H}^1(\mathcal{N}_f) = 0$,
and so $f$ is an interior curve.
\end{lm}
\begin{proof}
Note that a smooth point of a scheme lies in a unique component;
thus $\mathbb{H}^1(\mathcal{N}_f) = 0$ implies $f$ is an interior curve.
To show $\mathbb{H}^1(\mathcal{N}_f \otimes \mathcal{F}) = 0$ as desired, we use
(the long exact sequence in hypercohomology attached to) the normal complex exact triangle
\[\mathcal{N}_f|_X(-\Gamma) \otimes \mathcal{F} \to \mathcal{N}_f \otimes \mathcal{F} \to \mathcal{N}_f \otimes \mathcal{F} |_Y \to,\]
which, as desired, reduces $\mathbb{H}^1(\mathcal{N}_f \otimes \mathcal{F}) = 0$ to
\[\mathbb{H}^1(\mathcal{N}_f \otimes \mathcal{F}|_X(-\Gamma)) = \mathbb{H}^1(\mathcal{N}_f \otimes \mathcal{F}|_Y) = 0. \qedhere\]
\end{proof}


\begin{lm} \label{can-degenerate}
Fix $i \in \{1, 2\}$, and
suppose $f^\circ_i \colon C^\circ_i \to \pp^r$ is a specialization of $C_i \to \pp^r$ in Theorem~\ref{main}, which
still passes through the general set of points $\Gamma$, and satisfies
$\mathbb{H}^1(\mathcal{N}_{f^\circ_i}) = 0$.
Then it suffices to show the resulting curve is a BN-curve after we
specialize $C_i$ to $C^\circ_i$ in Theorem~\ref{main}.
(So long as we leave the other curve general.)
\end{lm}
\begin{proof}
Without loss of generality, we suppose $i = 1$.
Write $\mathcal{K}_j = \bar{M}_{g_j, n}(\pp^r, d_j)$
and $\mathcal{P} = (\pp^r)^n$.

Our pair of curves $(C_1, C_2)$ in Theorem~\ref{main} then corresponds to
a point (which we may as well suppose is the generic point),
in some component of the fiber product $\mathcal{K}_1 \times_{\mathcal{P}} \mathcal{K}_2$
which dominates $\mathcal{P}$.
As $\mathcal{K}_1$ is irreducible, any component of
$\mathcal{K}_1 \times_{\mathcal{P}} \mathcal{K}_2$ which dominates $\mathcal{P}$
in fact dominates $\mathcal{K}_1$. This means there exists a specialization of $(C_1, C_2)$
of the form $(C_1^\circ, \bar{C}_2)$, where $\bar{C}_2$ satisfies the assumptions
(and therefore conclusions) of Lemma~\ref{lm:inter}.

Applying Lemma~\ref{union-res} and~\ref{lm:union-smooth}, the specialization
$C_1^\circ \cup_\Gamma \bar{C}_2 \to \pp^r$ is an interior curve.
If it is a BN-curve, we can thus conclude $C_1 \cup_\Gamma C_2 \to \pp^r$ is an interior BN-curve
as desired.
\end{proof}

\begin{lm} \label{can-degenerate-small-hyp-g}
Suppose $g^\circ \colon D^\circ \to H$ is a specialization of $g$ in Theorem~\ref{small-hyp}, which
still passes through the general set of points $\Gamma \subset H$, and satisfies
$\mathbb{H}^1(\mathcal{N}_{g^\circ}) = H^1((g^\circ)^* \oo_{\pp^r}(1)(\Gamma)) = 0$.
Then it suffices to show the resulting curve is a BN-curve after we
specialize $g$ to $g^\circ$ in Theorem~\ref{small-hyp}.
(So long as we leave $f$ general.)
\end{lm}
\begin{proof}
Writing $h \colon C \cup_\Gamma D^\circ \to \pp^r$ for the resulting curve,
an analogous argument as in Lemma~\ref{can-degenerate} works here so long as
$\mathbb{H}^1(\mathcal{N}_h|_{D^\circ}) = 0$.
To see this, we note that since $f$ is general, $f$ is transverse to $H$; we therefore have the exact triangle
\[\mathcal{N}_{g^\circ} \to \mathcal{N}_h|_{D^\circ} \to (g^\circ)^* N_{H/\pp^r}(\Gamma)[0] \to,\]
which gives rise to the long exact sequence
\[\cdots \to \mathbb{H}^1(\mathcal{N}_{g^\circ}) \to \mathbb{H}^1(\mathcal{N}_h|_{D^\circ}) \to H^1((g^\circ)^* \oo_{\pp^r}(1)(\Gamma)) \to \cdots.\]
Our assumption that 
$\mathbb{H}^1(\mathcal{N}_{g^\circ}) = H^1((g^\circ)^* \oo_{\pp^r}(1)(\Gamma)) = 0$
then implies that $\mathbb{H}^1(\mathcal{N}_h|_{D^\circ}) = 0$ as desired.
\end{proof}

\begin{lm} \label{can-degenerate-small-hyp-f}
Suppose $f^\circ \colon C^\circ \to H$ is a specialization of $f$ in Theorem~\ref{small-hyp}, which
still passes through the general set of points $\Gamma \subset H$, is transverse to $H$ along $\Gamma$, and satisfies
\mbox{$\mathbb{H}^1(\mathcal{N}_{f^\circ}) = 0$}.
Then it suffices to show the resulting curve is a BN-curve after we
specialize $f$ to $f^\circ$ in Theorem~\ref{small-hyp}, provided that $d'' \geq g'' + r - 1$.
(So long as we leave $g$ general.)
\end{lm}
\begin{proof}
Writing $h \colon C^\circ \cup_\Gamma D \to \pp^r$ for the resulting curve,
an analogous argument as in Lemma~\ref{can-degenerate} works here so long as
$\mathbb{H}^1(\mathcal{N}_h|_D(-\Gamma)) = 0$.
To see this, we note that since $f^\circ$ is transverse to $H$ along $\Gamma$ by assumption,
we have an exact triangle
\[\mathcal{N}_{g}(-\Gamma) \to \mathcal{N}_h|_D(-\Gamma) \to g^* N_{H/\pp^r}[0] \to,\]
which gives rise to the long exact sequence
\[\cdots \to \mathbb{H}^1(\mathcal{N}_{g}(-\Gamma)) \to \mathbb{H}^1(\mathcal{N}_h|_D(-\Gamma)) \to H^1(g^* \oo_{\pp^r}(1)) \to \cdots.\]
Since $g \colon D \to H$ is general, our assumption that $d'' \geq g'' + r - 1$
implies $H^1(g^* \oo_{\pp^r}(1)) = 0$.
As $\mathbb{H}^1(\mathcal{N}_{g}(-\Gamma)) = 0$ by Lemma~\ref{lm:inter},
we conclude
$\mathbb{H}^1(\mathcal{N}_h|_D(-\Gamma)) = 0$ as desired.
\end{proof}

\begin{lm} \label{can-degenerate-small-hyp-f-var}
Suppose $f^\circ \colon C^\circ \to H$ is a specialization of $f$ in Theorem~\ref{small-hyp}, which
still passes through the general set of points $\Gamma \subset H$, is transverse to $H$ along a subset $\Gamma' \subset \Gamma$, and satisfies
$\mathbb{H}^1(\mathcal{N}_{f^\circ}(-\Gamma)) = 0$.
Then it suffices to show the resulting curve is a BN-curve after we
specialize $f$ to $f^\circ$ in Theorem~\ref{small-hyp}, provided that $d'' \geq g'' + r - 1 - \# \Gamma'$.
(So long as we leave $g$ general.)
\end{lm}
\begin{proof}
Writing $h \colon C^\circ \cup_\Gamma D \to \pp^r$ for the resulting curve,
an analogous argument as in Lemma~\ref{can-degenerate} works here so long as
$\mathbb{H}^1(\mathcal{N}_h|_D) = 0$.
Write $h' \colon C^\circ \cup_{\Gamma'} D \to \pp^r$;
by Lemma~\ref{union-res}, it suffices to show $\mathbb{H}^1(\mathcal{N}_{h'}|_D) = 0$.

To see this, we note that since $f^\circ$ is transverse to $H$ along $\Gamma'$ by assumption,
we have an exact triangle
\[\mathcal{N}_{g} \to \mathcal{N}_{h'}|_D \to g^* N_{H/\pp^r}(\Gamma')[0] \to,\]
which gives rise to the long exact sequence
\[\cdots \to \mathbb{H}^1(\mathcal{N}_{g}) \to \mathbb{H}^1(\mathcal{N}_h|_D) \to H^1(g^* \oo_{\pp^r}(1)(\Gamma')) \to \cdots.\]
Since $g \colon D \to H$ is general, $d'' \geq g'' + r - 1 - \# \Gamma'$
implies $\dim H^1(g^* \oo_{\pp^r}(1)) \leq \# \Gamma'$,
and thus $H^1(g^* \oo_{\pp^r}(1)(\Gamma')) = 0$.
As $\mathbb{H}^1(\mathcal{N}_{g}) = 0$ by Lemma~\ref{lm:inter},
$\mathbb{H}^1(\mathcal{N}_{h'}|_D) = 0$ as desired.
\end{proof}

\begin{lm} \label{cohom-small-mid}
If $\hat{f}$ is as in Theorem~\ref{small-mid}, then
$\mathbb{H}^1(\mathcal{N}_{\hat{f}}|_{\pp^1}(-\Gamma)) = 0$
\end{lm}
\begin{proof}
To see that $\mathbb{H}^1(\mathcal{N}_{\hat{f}}|_{\pp^1}(-\Gamma)) = 0$,
let $\Delta \subseteq \Gamma$ be a subset of size $r - a \leq a + 2 = \# \Gamma$
such that the image of $2 \Delta \subset C$ under $\hat{f}$ along with
$\hat{f}(\pp^1)$ spans $\pp^r$,
and write $g \colon C \cup_\Delta \pp^1 \to \pp^r$.
By Lemma~\ref{union-res}, it suffices to show
$\mathbb{H}^1(\mathcal{N}_g|_{\pp^1}(-\Gamma)) = 0$.
By construction, $g$ is unramified in a neighborhood of $\pp^1$
in $C \cup_\Delta \pp^1$, and so the normal complex
$\mathcal{N}_g$ can be identified with the
normal sheaf $N_g$.

Write $\Lambda$ for the linear span of $g(\pp^1)$, and for $p \in \Delta$,
let $H_p$ be the hyperplane spanned by $\Lambda$ and the image of
$2(\Delta \smallsetminus \{p\}) \subset C$ under $g$.
Then $\Lambda$ is the complete intersection of the $H_p$, and so
we obtain an exact sequence for the normal bundle of the image $g(\pp^1)$:
\[0 \to N_{g(\pp^1) / \Lambda} \to N_{g(\pp^1)} \to \bigoplus_{p \in \Delta} N_{H_p}|_{g(\pp^1)} \to 0.\]
Applying Corollary~3.2 of~\cite{hh} (stated for $r = 3$
when $g$ is an immersion, but the proof given applies for $r$ arbitrary
and a long as $g$ is unramified in a neighborhood of the given component),
this induces an exact sequence
\[0 \to N_{g(\pp^1) / \Lambda} \to N_g|_{\pp^1} \to \bigoplus_{p \in \Delta} N_{H_p}|_{g(\pp^1)}(p) \to 0.\]
Twisting by $-\Gamma$, it remains to show (for $p \in \Delta$):
\[H^1(N_{g(\pp^1) / \Lambda}(-\Gamma)) = H^1(N_{H_p}|_{g(\pp^1)}(p)(-\Gamma)) = 0\]
But $H^1(N_{g(\pp^1) / \Lambda}(-\Gamma))$
vanishes by Theorem~1.3 of~\cite{aly},
and
\[N_{H_p}|_{g(\pp^1)}(p)(-\Gamma) \simeq g^* \oo_{H_p}(1)(p)(-\Gamma) \simeq \oo_{\pp^1}(a + 1 - (a + 2)) = \oo_{\pp^1}(-1),\]
which has vanishing $H^1$ as desired.
\end{proof}

\begin{lm} \label{can-degenerate-small-mid}
Suppose $f^\circ \colon C^\circ \to \pp^r$ is a specialization of $f$ in Theorem~\ref{small-mid}, which
satisfies $\mathbb{H}^1(\mathcal{N}_{f^\circ}) = 0$.
Then it suffices to show the resulting curve is a BN-curve after we
specialize $f$ to $f^\circ$ in Theorem~\ref{small-mid}.
\end{lm}
\begin{proof}
By Lemma~\ref{cohom-small-mid}, we have
$\mathbb{H}^1(\mathcal{N}_{\hat{f^\circ}}|_{\pp^1}(-\Gamma)) = 0$.
This implies that any deformation of $f^\circ$
lifts to a deformation of $\hat{f^\circ}$;
moreover since $\mathbb{H}^1(\mathcal{N}_{f^\circ}) = 0$ by assumption,
Lemma~\ref{lm:union-smooth} implies
$\mathbb{H}^1(\mathcal{N}_{\hat{f^\circ}}) = 0$ and so $\hat{f^\circ}$ is an interior curve.
\end{proof}

\section{Proof of Theorem~\ref{main} \label{sec:main}}

For this section, we adopt the notation of Theorem~\ref{main}; that is, we let
$C_i \to \pp^r$ (for $i \in \{1, 2\}$) be nonspecial BN-curves
of degree $d_i$ and genus $g_i$, which pass through
a set $\Gamma \subset \pp^r$ of $n$ general points.

Our argument will be by induction on the total degree $d_1 + d_2$,
and for fixed total degree by induction on $n$.
There will be several cases to consider, but in each case our argument will
follow the following outline:

As mentioned in the introduction,
we may begin by supposing that both curves $C_i \to \pp^r$
are general in some component of the space of NNS-curves
passing through $\Gamma$;
our goal is to degenerate one curve, say $f_1 \colon C_1 \to \pp^r$,
to a reducible curve $f_1^\circ \colon C_1' \cup_{\Gamma_0} C_1'' \to \pp^r$,
where $C_1' \to \pp^r$ and $C_1'' \to \pp^r$ are NNS-curves and NS-curves respectively,
with specified degrees $d_1'$ and $d_1''$ (with $d_1' + d_1'' = d_1$),
with specified genera $g_1'$ and $g_1''$, and meeting eachother in a specified number of points $n_0 = \#\Gamma_0$
(with $g_1' + g_1'' + n_0 - 1 = g_1$).
Let $n'$ and $n''$ be the integers with $n' + n'' = n$,
for which we desire $C_1'$ to pass through $n'$ points $\Gamma' \subseteq \Gamma$
and $C_1''$ to pass through $n''$ points $\Gamma'' \subseteq \Gamma$, with $\Gamma' \cup \Gamma'' = \Gamma$.
We verify:
\begin{enumerate}
\item \label{C1p-inequalities}
We have $d_1' \geq g_1' + r$. Also,
$(r + 1) d_1' - (r - 3)(g_1' - 1) - (r - 1) (n' + n_0) \geq 0$,
with strict inequality in the cases
$(d_1', g_1', r) \in \{(5, 2, 3), (7, 2, 5)\}$.

\item \label{C1pp-inequalities}
\begin{enumerate}
\item \label{C1pp-inequalities-nd}
If $d_1'' \geq g_1'' + r$, then
$(r + 1) d_1'' - (r - 3)(g_1'' - 1) - (r - 1) \max(n'', n_0) \geq 0$,
with strict inequality in the cases
$(d_1'', g_1'', r) \in \{(5, 2, 3), (7, 2, 5)\}$.
\item \label{C1pp-inequalities-de}
Otherwise, $d_1'' + 1 - g_1'' - \max(n'', n_0) \geq 0$.
Note that when $C_1''$ is a line (i.e.\ $d_1'' = 1$ and $g_1'' = 0$),
this inequality becomes $\max(n'', n_0) \leq 2$.
\end{enumerate}

\item \label{for-main-sp}
$C_1' \cup_{\Gamma_0} C_1'' \to \pp^r$ satisfies the assumptions of
Theorem~\ref{main-sp}.
In every application, we can just check $n_0 \leq r + 2$,
since this implies
\[(r + 1) d_1' - r g_1' + r \geq (r + 1)(g_1' + r) - r g_1' + r = r(r + 2) + g_1' \geq r(r + 2) \geq rn_0.\]
\end{enumerate}

These assumptions imply we can degenerate $C_1 \to \pp^r$
to
$C_1' \cup_{\Gamma_0} C_1'' \to \pp^r$: Conditions~\ref{C1p-inequalities}
and~\ref{C1pp-inequalities} imply --- via Theorem~\ref{cor14} (for \ref{C1p-inequalities}/\ref{C1pp-inequalities-nd}),
or Remark~\ref{aly-deg} (for \ref{C1pp-inequalities-de}) ---
that $C_1'$ and $C_1''$
pass through a set $\Gamma_0$ of $n_0$ general points.
Applying Theorem~\ref{main-sp} and Condition~\ref{for-main-sp}, this shows
$C_1' \cup_{\Gamma_0} C_1'' \to \pp^r$ is a BN-curve.
Additionally, our assumptions imply
such a degeneration can be performed so that
$C_1'$ and $C_1''$ pass through sets $\Gamma'$ and $\Gamma''$,
of cardinality $n'$ and $n''$, with
$\Gamma = \Gamma' \cup \Gamma''$:
In fact, the construction can be phrased as
first finding a curve $C_1''$ through $\Gamma''$; such a curve
also passes through a general set $\Gamma_0$ of $n_0$ general points by assumption;
we then find a curve $C_1'$ passing through $\Gamma' \cup \Gamma_0$.
Moreover, applying Lemmas~\ref{lm:inter}, \ref{union-res}, and~\ref{lm:union-smooth},
and using the generality of $\Gamma_0$, we see that
$\mathbb{H}^1(\mathcal{N}_{f_1^\circ}) = 0$.
In order to prove Theorem~\ref{main},
from Lemma~\ref{can-degenerate}, it therefore suffices to show
$(C_1' \cup_{\Gamma_0} C_1'') \cup_\Gamma C_2 \to \pp^r$
is a BN-curve.

We then verify:

\begin{enumerate}[resume]
\item \label{induct-inner} $C_1'' \cup_{\Gamma''} C_2 \to \pp^r$ satisfies the assumptions of either
\begin{enumerate}
\item \label{induct-inner-main}
Theorem~\ref{main}.
In every application, we can just check $d_2 = g_2 + r$ and $d_1'' = g_1'' + r$.
\item \label{induct-inner-main-sp}
Theorem~\ref{main-sp}.
Note that this is automatic if Condition~\ref{C1pp-inequalities-de} was satisfied: 
\[(r + 1)d_1'' - rg_1'' + r \geq r(d_1'' - g_1'' + 1) \geq r \cdot \max(n'', n_0) \geq rn''.\]
\end{enumerate}
\end{enumerate}

This implies $C_1'' \cup_{\Gamma''} C_2 \to \pp^r$ is a BN-curve,
by application of Theorem~\ref{main-sp} (for \ref{induct-inner-main-sp}),
or by induction as necessary (for \ref{induct-inner-main});
in the second case,
note that the total degree $d_1'' + d_2$ satisfies $d_1'' + d_2 < d_1 + d_2$.

We then verify:

\begin{enumerate}[resume]
\item \label{induct-outer} $C_1' \cup_{\Gamma_0 \cup \Gamma'} (C_1'' \cup_{\Gamma''} C_2) \to \pp^r$
satisfies the assumptions of either
\begin{enumerate}
\item \label{induct-outer-main}
Theorem~\ref{main}, in which case we also have $n_0 \leq n''$.
In every application, we may verify the assumptions of
Theorem~\ref{main} by either checking
that both curves are limit linearly normal, i.e.\ checking that
\[d_1' = g_1' + r \tand d_1'' + d_2 = g_1'' + g_2 + n'' - 1 + r;\]
or by taking $i = 2$, for which it suffices to show
\[(r + 1) (d_1'' + d_2) - (r - 3)((g_1'' + g_2 + n'' - 1) - 1) - (r - 1)(n_0 + n') \geq 4. \]

\item \label{induct-outer-main-sp}
Theorem~\ref{main-sp}. 
In every application, we may check $n_0 + n' \leq r + 2$,
which implies 
the required inequality $(r + 1) d_1' - r g_1' + r \geq r(n_0 + n')$
as in Condition~\ref{for-main-sp}.
\end{enumerate}
\end{enumerate}

This either completes the proof (for \ref{induct-outer-main-sp}),
or reduces us inductively to another
instance of Theorem~\ref{main} (for \ref{induct-outer-main}).
This other instance has the same total degree;
and since $n_0 \leq n''$ implies $\#(\Gamma_0 \cup \Gamma') = n_0 + n' \leq n'' + n' = n$,
the value of $n$ does not increase. If $n_0 < n''$ in \ref{induct-outer-main},
then $n$ decreases, so we are done by induction;
otherwise, we must show this inequality is at least strict
eventually after running
the entire argument a finite number of times.

Before starting the proof, let us rewrite our assumption that
$C_1 \cup_\Gamma C_2 \to \pp^r$ has nonnegative Brill--Noether number in a more convenient form:
\begin{align*}
\rho(C_1 \cup_\Gamma C_2 \to \pp^r) &= (r + 1)(d_1 + d_2) - r(g_1 + g_2 + n - 1) - r(r + 1) \\
&= [(r + 1) d_1 + rg_1 - r(r + 1)] + [(r + 1) d_1 + rg_1 - r(r + 1)] - rn + r(r + 2) \\
&= \rho(C_1 \to \pp^r) + \rho(C_2 \to \pp^r) - rn + r(r + 2);
\end{align*}
and so our assumption is equivalent to
\begin{equation} \label{n-bound-1}
n \leq r + 2 + \frac{\rho(C_1 \to \pp^r) + \rho(C_2 \to \pp^r)}{r}.
\end{equation}

\begin{proof}[\hypertarget{nr2}{Proof when $n \leq r + 2$.}]
As in Condition~\ref{for-main-sp}, this implies the assumptions of
Theorem~\ref{main-sp} are satisfied; applying
Theorem~\ref{main-sp} thus yields the desired result.
\end{proof}

\begin{proof}[\hypertarget{both-linnorm}{Proof when $d_i = g_i + r$ for both $i \in \{1, 2\}$.}]
For fixed $d_1 + d_2$ and fixed $n$, we argue by induction on $\min(g_1, g_2)$.
Without loss of generality, suppose $g_1 \leq g_2$. Since $\rho(g + r, g, r) = g$,
Equation~\eqref{n-bound-1} becomes
\begin{equation} \label{n-bound}
n \leq r + 2 + \frac{g_1 + g_2}{r}.
\end{equation}
We now consider several cases:

\paragraph{\boldmath If
$g_1 \geq 1$, and if $(r - 1)n \leq 4g_1 + r^2 + 2r - 7$
with strict inequality in the cases
$(g_1, r) \in \{(3, 3), (3, 5)\}$:}
First note that, since $d_1 = g_1 + r$, our second inequality rearranges to give
\[(r - 1)n \leq (r + 1)(d_1 - 1) - (r - 3)((g_1 - 1) - 1),\]
with strict inequality in the cases
$(d_1 - 1, g_1 - 1, r) \in \{(5, 2, 3), (7, 2, 5)\}$.

We degenerate $C_1 \to \pp^r$ to $C_1' \cup_{\Gamma_0} C_1'' \to \pp^r$,
where $C_1'$ is of degree $d_1' = d_1 - 1$ and genus $g_1' = g_1 - 1$,
and $C_1''$ is a line ($d_1'' = 1$ and $g_1'' = 0$), and $n_0 = \# \Gamma_0 = 2$;
we let $n' = n - 2$ and $n'' = 2$. Conditions
\ref{C1p-inequalities}, \ref{C1pp-inequalities-de}, \ref{for-main-sp},
\ref{induct-inner-main-sp}, and \ref{induct-outer-main} (with both curves limit linearly normal),
from the above discussion are easily verified.
The inequality $n_0 \leq n''$ in \ref{induct-outer-main} is an equality;
but $\min(g_1, g_2)$ decreases, so that completes the induction.

\paragraph{\boldmath If 
$g_2 \geq r$, and if $(r - 1)n \leq 4g_2 + r^2 - r - 4$ with strict inequality in the cases
$(g_2, r) \in \{(5, 3), (7, 5)\}$:}
Exchanging indices, we can instead consider the case when
$g_1 \geq r$, and $(r - 1)n \leq 4g_1 + r^2 - r - 4$ with strict inequality in the cases
$(g_1, r) \in \{(5, 3), (7, 5)\}$.
Since $d_1 = g_1 + r$, our second inequality rearranges to give
\[(r - 1)(n - 1) \leq (r + 1)(d_1 - r) - (r - 3)((g_1 - r) - 1),\]
with strict inequality in the cases
$(d_1 - r, g_1 - r, r) \in \{(5, 2, 3), (7, 2, 5)\}$.

We degenerate $C_1 \to \pp^r$ to $C_1' \cup_{\Gamma_0} C_1'' \to \pp^r$,
where $C_1'$ is a rational normal curve ($d_1' = r$ and $g_1' = 0$),
and $C_1''$ is of degree $d_1' = d_1 - r$ and genus $g_1' = g_1 - r$,
and $n_0 = \# \Gamma_0 = r + 1$;
we let $n' = 1$ and $n'' = n - 1$.
Conditions
\ref{C1p-inequalities}, \ref{C1pp-inequalities-nd}, \ref{for-main-sp},
\ref{induct-inner-main}, and \ref{induct-outer-main-sp},
from the above discussion are easily verified,
so that completes the induction.

\hypertarget{g1z-g2r}{\paragraph{\boldmath If $g_1 = 0$ and $g_2 \leq r - 1$:}} Equation~\eqref{n-bound}
implies $n \leq r + 3 - \frac{1}{r}$; as $n$ is an integer, $n \leq r + 2$,
so this falls into a case already considered (\hyperlink{nr2}{``Proof when $n \leq r + 2$''}).

\hypertarget{g1z-g2i}{\paragraph{\boldmath If $g_1 = 0$, and $(r - 1)n \geq 4g_2 + r^2 - r - 4$ with strict inequality except when $(g_2, r) \in \{(5, 3), (7, 5)\}$:}}
Since $C_1$ is a rational normal curve, it can only pass through $n$
points if
\[(r - 1)n \leq (r + 1) d_1 - (r - 3)(g_1 - 1) = (r - 1)(r + 3) \quad \Leftrightarrow \quad n \leq r + 3.\]
Our inequality then gives
\[4g_2 + r^2 - r - 4 \leq (r - 1)(r + 3) \quad \Leftrightarrow \quad g_2 \leq \frac{3r + 1}{4},\]
with strict inequality unless $(g_2, r) \in \{(5, 3), (7, 5)\}$.
Thus $g_2 \leq r - 1$,
so this falls into a case already considered (\hyperlink{g1z-g2r}{``If $g_1 = 0$ and $g_2 \leq r - 1$''}).

\hypertarget{g1i-g2r}{\paragraph{\boldmath If $(r - 1)n \geq 4g_1 + r^2 + 2r - 7$
with strict inequality unless
$(g_1, r) \in \{(3, 3), (3, 5)\}$, and $g_2 \leq r - 1$:}}
From Equation~\eqref{n-bound},
\[n \leq r + 2 + \frac{g_1 + g_2}{r} \leq r + 2 + \frac{2g_2}{r} \leq r + 2 + \frac{2(r - 1)}{r} = r + 4 - \frac{2}{r} \quad \Rightarrow \quad n \leq r + 3. \]
Consequently,
\[4g_1 + r^2 + 2r - 7 \leq (r - 1)n \leq (r - 1)(r + 3) \quad \Leftrightarrow \quad g_1 \leq 1,\]
with strict inequality unless
$(g_1, r) \in \{(3, 3), (3, 5)\}$. In particular, $g_1 = 0$,
so this falls into a case already considered (\hyperlink{g1z-g2r}{``If $g_1 = 0$ and $g_2 \leq r - 1$''}). 

\paragraph{\boldmath If $(r - 1)n \geq 4g_1 + r^2 + 2r - 7$
with strict inequality unless
$(g_1, r) \in \{(3, 3), (3, 5)\}$, and $(r - 1)n \geq 4g_2 + r^2 - r - 4$
with strict inequality unless $(g_2, r) \in \{(5, 3), (7, 5)\}$:}
Adding these two inequalities together, we obtain
\[(2r - 2)n \geq 4(g_1 + g_2) + 2r^2 + r - 11,\]
with strict inequality unless $r \in \{3, 5\}$.
Combining with Equation~\eqref{n-bound},
\[(2r - 2) \left(r + 2 + \frac{g_1 + g_2}{r}\right) \geq 4(g_1 + g_2) + 2r^2 + r - 11;\]
or upon rearrangement
\[g_1 + g_2 \leq \frac{r^2 + 7r}{2r + 2} = r + 1 - \frac{(r - 1)(r - 2)}{2r + 2}.\]
This holds with strict inequality unless $r \in \{3, 5\}$; in particular we always have
\[g_1 + g_2 \leq r.\]
Since the case $g_1 = 0$ was already considered
(\hyperlink{g1z-g2i}{``If $g_1 = 0$, and $(r - 1)(n - 1) \geq 4g_2 + r^2 - 2r - 3$ with strict inequality except when $(g_2, r) \in \{(5, 3), (7, 5)\}$''}),
we may suppose $g_1 \geq 1$;
the above inequality then gives $g_2 \leq r - 1$, so we are again in a case already considered
(\hyperlink{g1i-g2r}{``If $(r - 1)n \geq 4g_1 + r^2 + 2r - 7$
with strict inequality unless
$(g_1, r) \in \{(2, 3), (2, 5)\}$, and $g_2 \leq r - 1$''}).
\end{proof}

Except when both curves are limit linearly normal (which was considered
above), we have by assumption that for at least one $i \in \{1, 2\}$,
\begin{equation} \label{assume}
(r + 1) d_i - (r - 3)(g_i - 1) - (r - 1)n \geq \begin{cases} 
2 & \text{if $d_i > g_i + r$;} \\
4 & \text{if $d_i = g_i + r$.}
\end{cases}
\end{equation}
\emph{For the remainder of this section, assume without loss of generality
that this happens for $i = 1$.}

\begin{proof}[\hypertarget{not-linnorm}{Proof when $d_1 > g_1 + r$, assuming Equation~\eqref{assume} is strict if $(d_1, g_1, r) \in \{(6, 2, 3), (8, 2, 5)\}$.}] \ \\ 
First note that Equation~\eqref{assume} rearranges to give
\[(r - 1)(n - 1) \leq (r + 1)(d_1 - 1) - (r - 3)(g_1 - 1),\]
with strict inequality in the cases
$(d_1 - 1, g_1, r) \in \{(5, 2, 3), (7, 2, 5)\}$.

Next note that $(r + 1) d_2 - (r - 3)(g_2 - 1) \geq (r - 1)n$ (c.f.\ Theorem~\ref{cor14});
or upon rearrangement,
\[(r + 1) (d_2 + 1) - (r - 3)((g_2 + 1) - 1) - (r - 1)(n - 1) \geq r + 3 \geq 4.\]

We degenerate $C_1 \to \pp^r$ to $C_1' \cup_{\Gamma_0} C_1'' \to \pp^r$,
where $C_1'$ is of degree $d_1' = d_1 - 1$ and genus $g_1' = g_1$,
and $C_1''$ is a line ($d_1'' = 1$ and $g_1'' = 0$), and $n_0 = \# \Gamma_0 = 1$;
we let $n' = n - 2$ and $n'' = 2$. Conditions
\ref{C1p-inequalities}, \ref{C1pp-inequalities-de}, \ref{for-main-sp},
\ref{induct-inner-main-sp}, and \ref{induct-outer-main} (with $i = 2$),
from the previous discussion
are easily verified.
The inequality $n_0 \leq n''$ in \ref{induct-outer-main} is strict, so that completes the induction.
\end{proof}

\begin{proof}[\hypertarget{one-linnorm}{Proof when $d_1 = g_1 + r$.}]
We may suppose $d_2 > g_2 + r$, since the case where $d_i = g_i + r$ for both $i \in \{1, 2\}$
has been considered already
(\hyperlink{both-linnorm}{``Proof when $d_i = g_i + r$ for both $i \in \{1, 2\}$''}).

As above, note that $(r + 1) d_2 - (r - 3)(g_2 - 1) \geq (r - 1)n$ (c.f.\ Theorem~\ref{cor14});
or upon rearrangement,
\[(r + 1) (d_2 + 1) - (r - 3)((g_2 + 1) - 1) - (r - 1)n \geq 4 > 2.\]

First suppose that Equation~\eqref{assume} is strict if $(d_1, g_1, r) \in \{(6, 3, 3), (8, 3, 5)\}$.
Upon rearrangement, this gives
\[(r - 1)n \leq (r + 1)(d_1 - 1) - (r - 3)((g_1 - 1) - 1),\]
with strict inequality in the cases
$(d_1 - 1, g_1, r) \in \{(5, 2, 3), (7, 2, 5)\}$.

In this case, we degenerate $C_1 \to \pp^r$ to $C_1' \cup_{\Gamma_0} C_1'' \to \pp^r$,
where $C_1'$ is of degree $d_1' = d_1 - 1$ and genus $g_1' = g_1 - 1$,
and $C_1''$ is a line ($d_1'' = 1$ and $g_1'' = 0$), and $n_0 = \# \Gamma_0 = 2$;
we let $n' = n - 2$ and $n'' = 2$. Conditions
\ref{C1p-inequalities}, \ref{C1pp-inequalities-de}, \ref{for-main-sp},
\ref{induct-inner-main-sp}, and \ref{induct-outer-main} (with $i = 2$),
from the previous discussion are easily verified.
The inequality $n_0 \leq n''$ in \ref{induct-outer-main} is an equality;
but upon exchanging indices, we are in the previous case (\hyperlink{not-linnorm}{``Proof when $d_1 > g_1 + r$, assuming Equation~\eqref{assume} is strict if $(d_1, g_1, r) \in \{(6, 2, 3), (8, 2, 5)\}$''}),
so that completes the induction.

It remains to consider the cases where
$(d_1, g_1, r) \in \{(6, 3, 3), (8, 3, 5)\}$
and Equation~\eqref{assume} is an equality, i.e.
\[(r + 1) d_1 - (r - 3)(g_1 - 1) - (r - 1)n = 4 \quad \Rightarrow \quad n = \frac{(r + 1) d_1 - (r - 3)(g_1 - 1) - 4}{r - 1} = 10.\]
From Equation~\eqref{n-bound-1}, that gives
\[10 \leq r + 2 + \frac{3 + \rho(C_2 \to \pp^r)}{r} \quad \Rightarrow \quad \rho(C_2 \to \pp^r) \geq -r^2 + 8r - 3.\]
This gives
\begin{align*}
(r + 1) d_2 - (r - 3)(g_2 - 1) &= [(r + 1) d_2 - r g_2 - r(r + 1)] + r(r + 2) + 3(g_2 - 1) \\
&\geq - r^2 + 8r - 3 + r(r + 2) - 3 \\
&= 10r - 6.
\end{align*}
Consequently,
\[(r + 1) d_2 - (r - 3)(g_2 - 1) - (r - 1)n \geq 10r - 6 - 10(r - 1) = 4 > 2.\]
Exchanging indices, this falls into the previous case
(\hyperlink{not-linnorm}{``Proof when $d_1 > g_1 + r$, assuming Equation~\eqref{assume} is strict if $(d_1, g_1, r) \in \{(6, 2, 3), (8, 2, 5)\}$''}).
\end{proof}

\begin{proof}[Proof when $(d_1, g_1, r) \in \{(6, 2, 3), (8, 2, 5)\}$ and Equation~\eqref{assume} is an equality.]
We have
\[(r + 1) d_1 - (r - 3)(g_1 - 1) - (r - 1)n = 2 \quad \Rightarrow \quad n = \frac{(r + 1) d_1 - (r - 3)(g_1 - 1) - 2}{r - 1} = 11.\]
From Equation~\eqref{n-bound-1}, that gives
\[11 \leq r + 2 + \frac{(r + 3) + \rho(C_2 \to \pp^r)}{r} \quad \Rightarrow \quad \rho(C_2 \to \pp^r) \geq -r^2 + 8r - 3.\]
As before this implies
\[(r + 1) d_2 - (r - 3)(g_2 - 1) - (r - 1)n \geq 4.\]
So exchanging indices, this falls into one of the previous two cases.
(\hyperlink{one-linnorm}{``Proof when $d_1 = g_1 + r$''} or
\hyperlink{not-linnorm}{``Proof when $d_1 > g_1 + r$, assuming Equation~\eqref{assume} is strict if $(d_1, g_1, r) \in \{(6, 2, 3), (8, 2, 5)\}$''}).
\end{proof}

\section{Proof of Theorem~\ref{small-mid} \label{sec:mid}}

In this section, we prove Theorem~\ref{small-mid}. Our argument will be
by induction on $d$. The basic idea is to inductively degenerate $f$
to a map from a reducible curve,
until we reduce to a case where $\hat{f}$ can be constructed by hand
as the complete linear series attached to the dualizing sheaf of an
appropriate reducible curve.

\begin{proof}[Proof of Theorem~\ref{small-mid} when $a = r$:]
By Theorem~\ref{cor14},
$f(C)$ passes through a set $\Gamma \subset \pp^r$ of
$a + 2 = r + 2$ general points.
Note that $f$ satisfies $\mathbb{H}^1(\mathcal{N}_f) = 0$
by Lemma~\ref{lm:inter}, and that the image of the subscheme
$2 \Gamma \subset C$ under $f$ spans $\pp^r$ (indeed $\Gamma$ spans $\pp^r$).
Again by Theorem~\ref{cor14}, we may find a map $\pp^1 \to \pp^r$
of degree $r$
passing through $\Gamma$.
Taking $\hat{f}$ to be the induced map
$C \cup_\Gamma \pp^1 \to \pp^r$ completes the proof
(since this is an interior BN-curve by Theorem~\ref{main}).
\end{proof}

\begin{proof}[Proof of Theorem~\ref{small-mid} when
$a < r$ and $\rho(d, g, r) \geq r + 1$:]
Note that $\rho(d - 1, g , r) = \rho(d, g, r) - r  - 1 \geq 0$.
We may therefore
let $f_0 \colon C_0 \to \pp^r$
be a general BN-curve of degree $d - 1$ and genus $g$ to $\pp^r$.
Take $\Gamma_0 \subset C_0$ to be a general set of $a + 1$
points.
Write $\Lambda$ for the linear span of the image of
$\Gamma_0 \subset C_0$ under $f_0$. Note that this is a proper subspace
of $\pp^r$ since $\# \Gamma_0 = a + 1 \leq r$ by assumption,
and note that $f_0(\Gamma_0)$ is a collection of points in linear general
position in $\Lambda$ (using our characteristic zero assumption).
Because
\[\# \Gamma_0 = a + 1 \geq \frac{r - 2}{2} + 1 = \frac{r}{2}\]
(and again using our characteristic zero assumption),
the linear span of the image of $2 \Gamma_0 \subset C_0$ under $f_0$
is either a hyperplane (if equality holds above), or all of $\pp^r$
(otherwise).
In the former case, let $H$ be that hyperplane; in the latter,
pick an arbitrary hyperplane $H$ containing $\Lambda$.

Take $R \subset \Lambda$ to be a rational normal curve (of degree $a$)
through $f_0(\Gamma_0)$. Let $L \subset \pp^r$ be a line through
general points $p \in R$ and $q \in f_0(C_0)$.
Consider the map $(C_0 \cup_{\{q\}} L) \cup_{\Gamma \cup \{p\}} R \to \pp^r$.
Writing this map as $(C_0 \cup_\Gamma R) \cup_{\{p,q\}} L \to \pp^r$,
we see by two applications of Theorem~\ref{main-sp} that it is a BN-curve.
Moreover, since $f_0$ is nondegenerate, $q \notin H$; therefore,
the image of $2(\Gamma \cup \{p\}) \subset C_0 \cup_{\{q\}} L$
spans $\pp^r$.
Finally, writing $f \colon C_0 \cup_{\{q\}} L \to \pp^r$,
we conclude by Lemmas~\ref{lm:inter}, \ref{union-res}, and~\ref{lm:union-smooth}
that $\mathbb{H}^1(\mathcal{N}_f) = 0$, completing the proof.
\end{proof}

\begin{proof}[\hypertarget{glarge} Proof of Theorem~\ref{small-mid} when $g \geq r + 1$:]
Note that $\rho(d - r, g - r - 1, r) = \rho(d, g, r) \geq 0$.
We may therefore
let $f_0 \colon C_0 \to \pp^r$
be a general BN-curve of degree $d - r$ and genus $g - r - 1$ to $\pp^r$.
By our inductive hypothesis,
there exists a BN-curve $\hat{f}_0 \colon C_0 \cup_\Gamma \pp^1 \to \pp^r$
with $\# \Gamma = a + 2$ and $\hat{f}_0|_{\pp^1}$ of degree $a$, such that
$\hat{f}_0|_C = f_0$,
and such that the image of $2 \Gamma \subset C_0$ under $\hat{f}_0$
spans $\pp^r$.

By Theorem~\ref{cor14},
$f_0(C_0)$ passes through a set $\Delta \subset \pp^r$ of
$r + 2$ general points (disjoint from~$\Gamma$).
Again by Theorem~\ref{cor14}, we may find a map $f_1 \colon \pp^1 \to \pp^r$
of degree $r$ passing through $\Delta$.
Write $f \colon \pp^1 \cup_\Delta C_0 \to \pp^r$
for the map induced by gluing $f_0$ to $f_1$ along $\Delta$,
which is of degree $d$ from a curve of genus $g$.
By Lemmas~\ref{lm:inter}, \ref{union-res}, and~\ref{lm:union-smooth},
we have $\mathbb{H}^1(\mathcal{N}_f) = 0$.

Finally, let $\hat{f} \colon (\pp^1 \cup_\Delta C_0) \cup_\Gamma \pp^1 \to \pp^r$
be the map obtained by gluing $f$ to $\hat{f}_0$ along $C_0$.
By induction and then Theorem~\ref{main-sp},
$\hat{f} \colon \pp^1 \cup_\Delta (C_0 \cup_\Gamma \pp^1) \to \pp^r$
is a BN-curve. Since the image of
$2 \Gamma \subset C_0 \subset \pp^1 \cup C_0$ under $\hat{f}$
spans $\pp^r$ by construction, this completes the proof.
\end{proof}

\begin{proof}[Proof of Theorem~\ref{small-mid} when $\rho(d, g, r) \geq 1$ and $g \geq 1$ and $a \geq r + 1 - \rho(d, g, r)$:]
By assumption, $\rho(d - 1, g - 1, r) = \rho(d, g, r) - 1 \geq 0$.
We may therefore
let $f_0 \colon C_0 \to \pp^r$
be a general BN-curve of degree $d - 1$ and genus $g - 1$ to $\pp^r$.
We then proceed as in the previous case (\hyperlink{glarge}{``Proof of Theorem~\ref{small-mid} when $g \geq r + 1$''}), taking $\Delta$ to be of size $2$
and $f_1$ to be of degree~$1$.
\end{proof}

\begin{proof}[Completion of proof of Theorem~\ref{small-mid}:]
By what has been proven above, it remains to prove Theorem~\ref{small-mid}
when all of the following conditions are satisfied:
\begin{enumerate}
\item \label{aa} $\rho(d, g, r) = 0$ or $g = 0$ or $a = r - \rho(d, g, r)$;
\item $g \leq r$;
\item $\rho(d, g, r) \leq r$;
\item and $a < r$.
\end{enumerate}
Note that $\rho(d, g, r) \equiv g$ mod $r + 1$;
since $g \leq r$ and $\rho(d, g, r) \leq r$, this implies $\rho(d, g, r) = g$.
In particular, $\rho(d, g, r) = 0$ if and only if $g = 0$.
If $\rho(d, g, r) = g = 0$, the assumptions of Theorem~\ref{small-mid}
imply $a \geq r$, contradicting our assumption here that $a < r$.
Consequently, we must have $a = r - \rho(d, g, r)$ in Condition~\ref{aa}.
Thus, $g = \rho(d, g, r) = r - a$, or, upon rearrangement, $(d, g) = (2r - a, r - a)$.

Let $C$ be a curve of genus $r - a$ of general moduli, and $\Gamma \subset C$ a set of $a + 2$
general points. Identifying $\Gamma$ with any set of $a + 2$ (distinct) points in $\pp^1$,
we construct the curve $C \cup_\Gamma \pp^1$ of genus $r + 1$.
Write $\hat{f} \colon C \cup_\Gamma \pp^1 \to \pp^r$
for map given by the complete linear series attached to the dualizing sheaf $\omega_{C \cup_\Gamma \pp^1}$,
and let $f = \hat{f}|_C$.
Evidently $\hat{f}$ is a BN-curve, so it remains to show that
$\mathbb{H}^1(\mathcal{N}_f) = 0$, and
that the image of the subscheme $2 \Gamma \subset C$ under $f$ spans $\pp^r$.
For these, we use (the long exact sequence in cohomology attached to) the exact sequence of sheaves
\[0 \to K_{\pp^1} \to \omega_{C \cup_\Gamma \pp^1} \to K_C(\Gamma) \to 0.\]
Since $H^1(K_C(\Gamma)) = H^0(K_{\pp^1}) = 0$ and $\dim H^1(K_{\pp^1}) = 1 = \dim H^1(\omega_{C \cup_\Gamma \pp^1})$,
we conclude $H^0(\omega_{C \cup_\Gamma \pp^1}) \to H^0(K_C(\Gamma))$
is an isomorphism.
In particular, $f$ is the complete linear series for the line bundle $K_C(\Gamma)$.
Since $\Gamma \subset C$ is general,
and $\# \Gamma = a + 2 \geq r - a = \text{genus}(C)$,
the line bundle $K_C(\Gamma)$ is a general line bundle of degree $2r - a$ on $C$.
In particular,
$f$ is general BN-curve, so $\mathbb{H}^1(\mathcal{N}_f) = 0$
by Lemma~\ref{lm:inter}.
It remains to show the image of the subscheme $2 \Gamma \subset C$ under $f$ spans $\pp^r$.
Since $f$ is the complete linear series for $K_C(\Gamma)$, this reduces to showing
$H^0(K_C(\Gamma)(-2\Gamma)) = H^0(K_C(-\Gamma)) = 0$,
which again follows from $\# \Gamma = a + 2 \geq r - a = \text{genus}(C)$.
This completes the proof of Theorem~\ref{small-mid}.
\end{proof}

\section{Proof of Theorem~\ref{small-hyp} \label{sec:hyp}}

In the proof of Theorem~\ref{small-hyp}, we write $g'$ and $g''$ for the genera of $C$ and $D$
respectively, and $d'$ and $d''$ for the degrees of $f$ and $g$ respectively.
Our argument will be by induction on $d''$, and for fixed $d''$ by induction on $n$,
via degeneration.
The following lemmas will be useful for degenerating $f$:

\begin{lm} \label{lm:r1}
Let $f \colon C \to \pp^r$ be a general BN-curve of degree $d$ and genus $g$, and $\Gamma$
be a set of $n \leq \max(d, r + 1)$ points in a general hyperplane section of $C$,
and $\Delta \subset C$ be a divisor of degree $m \leq r + 3 - n$ with general support.
Then $\mathbb{H}^1(\mathcal{N}_f(-\Gamma - \Delta)) = 0$.

In particular, there exists a BN-curve
$f \colon C \to \pp^r$ transverse to $H$, of degree~$d$ and genus~$g$,
through $\Gamma \cup \Delta$,
where $\Gamma \subset H$ is a set of $n \leq r + 1$ general points,
and $\Delta \subset \pp^r$ is a set of $m \leq r + 3 - n$ general points,
if and only if $d \geq n$.
\end{lm}
\begin{proof}
We argue by induction on $g$. By increasing $m$ if necessary,
we note that $\oo_C(\Gamma + \Delta)$ is a general line bundle of degree $n + m$
if $g \in \{0, 1\}$; the result thus follows from Theorem~1.3 of~\cite{aly} in this case.

For the inductive step, we first consider the case where $\rho(d, g, r) \geq 1$
and $g \geq 2$. These imply $\rho(d - 1, g - 1, r) = \rho(d, g, r) - 1 \geq 0$
and $g - 1 \geq 1$.
By Theorem~\ref{main-sp}, we can specialize
$f$ to a map $f^\circ \colon C_0 \cup_{\{p, q\}} \pp^1 \to \pp^r$,
where $f^\circ|_{C_0}$ is of degree $d - 1$ and genus $g - 1$,
and $f^\circ|_{\pp^1}$ is a line.
Since our inequalities imply $d - 1 \geq r + 1$, we can specialize $\Gamma \cup \Delta$
to lie on $C_0$. By induction, we have $\mathbb{H}^1(\mathcal{N}_{f^\circ|_{C_0}}(-\Gamma - \Delta)) = 0$;
by Lemma~\ref{lm:inter}, we have $\mathbb{H}^1(\mathcal{N}_{f^\circ|_{\pp^1}}) = 0$.
Applying Lemmas~\ref{union-res} and~\ref{lm:union-smooth}, we conclude that
$\mathbb{H}^1(\mathcal{N}_{f^\circ}(-\Gamma - \Delta)) = 0$ as desired.

Next we consider the case where
$\rho(d, g, r) = 0$ and $g \geq 2$, which implies $g \geq r + 1$.
Write $a = \lceil (r - 2) / 2 \rceil$.
Note that $\rho(d - a, g - a - 1, r) = \rho(d, g, r) + r - a \geq 0$,
and that $g - a - 1 \geq 1$.
By Theorem~\ref{small-mid} and Lemma~\ref{cohom-small-mid},
we can specialize
$f$ to $f^\circ \colon C_0 \cup_A \pp^1 \to \pp^r$, with $\# A = a + 2$,
where $f^\circ|_{C_0}$ is of degree $d - a$ and genus $g - a - 1$,
and $f^\circ|_{\pp^1}$ is of degree $a$,
satisfying
$\mathbb{H}^1(\mathcal{N}_{f^\circ}|_{\pp^1}(-A)) = 0$.
Since our inequalities imply $d - a \geq r + 1$,
we can specialize $\Gamma \cup \Delta$
to lie on $C_0$.
By induction, we have $\mathbb{H}^1(\mathcal{N}_{f_0|_{C_0}}(-\Gamma - \Delta)) = 0$.
Applying Lemmas~\ref{union-res} and~\ref{lm:union-smooth}, we conclude that
$\mathbb{H}^1(\mathcal{N}_{f_0}(-\Gamma - \Delta)) = 0$ as desired.
\end{proof}

\begin{lm} \label{admits-deformation}
Let $f \colon C \cup_\Gamma D \to \pp^r$ be an unramified map from a reducible curve,
such that $f|_D$ factors as a composition of a general BN-curve
$f_D \colon D \to H$ of degree $d$ and genus $g$
with the inclusion of a hyperplane $H \subset \pp^r$,
while $f|_C$ is general in some component of
the space of WBN-curves transverse to $H$ along $\Gamma$.
Let $\Delta$ be a set of general points on $D$,
and $\Delta' \subset f(C) \cap H \smallsetminus \Gamma$,
such that $\Gamma \cup \Delta'$ and $\Delta$
are general sets of points in $H$.
Write $n = \# \Gamma$ and $m = \# \Delta$.

If $d - g + n \geq \max(m, r - 1)$,
then $H^1(f|_D^* \oo_{\pp^r}(1)(\Gamma)) = 0$, and
there exists a deformation of $f$
still passing through $\Delta \cup \Delta'$, and transverse
to $H$ along $\Delta \cup \Delta'$.
\end{lm}
\begin{proof}
First we show $H^1(f|_D^* \oo_{\pp^r}(1)(\Gamma)) = H^1(f|_D^* \oo_{\pp^r}(1)(\Gamma - \Delta - x)) = 0$ for any $x \in \Delta$.
Note that $\Gamma$, $\Delta \smallsetminus \{x\}$, and $\{x\}$,
are general subsets of $n$, $m - 1$, and $1$ points on $D$ respectively.
If $\mathcal{L}$ is a line bundle on a curve $X$, and $p \in X$ a general point,
and $k$ a positive integer, then by \cite{wronskian}:
\[\dim H^0(\mathcal{L}(-kp)) = \max(0, \dim H^0(\mathcal{L}) - k),\]
and thus by Serre duality:
\[\dim H^1(\mathcal{L}(kp)) = \max(0, \dim H^1(\mathcal{L}) - k).\]
Applying this for every point in $\Gamma$, then
every point in $\Delta \smallsetminus \{x\}$,
and then for $x$, it suffices to show
\[\dim H^1(f|_D^* \oo_{\pp^r}(1)) \leq \# \Gamma = n \tand \chi(f|_D^* \oo_{\pp^r}(1)(\Gamma - \Delta - x)) \geq 0.\]
But we have, by assumption,
\[\chi(f|_D^* \oo_{\pp^r}(1)(\Gamma - \Delta - x)) = (d + 1 - g) + n - m - 1 = d - g + n - m \geq 0.\]
Moreover, since $f_D$ is a general BN-curve,
we know by counting dimensions (using the Brill--Noether theorem \cite{bn})
that it is either nonspecial or linearly normal;
i.e.\ that $H^1(f|_D^* \oo_{\pp^r}(1)) = 0$ or
$H^0(f|_D^* \oo_{\pp^r}(1)) = r$.
In the first case, $H^1(f|_D^* \oo_{\pp^r}(1)) = 0 \leq \# \Gamma$, while
in the second case,
\[\dim H^1(f|_D^* \oo_{\pp^r}(1)) = r - \chi(f|_D^* \oo_{\pp^r}(1)) = r - (d + 1 - g) \leq n.\]
This shows $H^1(f|_D^* \oo_{\pp^r}(1)(\Gamma)) = H^1(f|_D^* \oo_{\pp^r}(1)(\Gamma - \Delta - x)) = 0$ as desired.

Next we show 
$f$ admits a deformation
still passing through $\Delta \cup \Delta'$, and transverse
to $H$ along $\Delta \cup \Delta'$.
For this it suffices
by deformation theory to check
$H^1(N^x) = 0$ for all $x \in \Delta$
(since $f$ is already transverse to $H$ along $\Delta'$)
where $N^x$ is defined by
\[N^x = \ker \left(N_f(-\Delta - \Delta') \to N_{H/\pp^r}|_{2x}\right).\]
To show this, we use the exact sequences
\begin{gather*}
0 \to N_f|_C (-\Gamma - \Delta') \to N^x \to \ker \left(N_f|_D (-\Delta) \to N_{H/\pp^r}|_{2x}\right) \to 0 \\
0 \to N_{f_D} (-\Delta) \to \ker \left(N_f|_D (-\Delta) \to N_{H/\pp^r}|_{2x}\right) \to f_D^* N_{H/\pp^r} (\Gamma - \Delta - x) \simeq f|_D^* \oo_{\pp^r}(1)(\Gamma - \Delta - x) \to 0.
\end{gather*}
Above we showed $H^1(f|_D^* \oo_{\pp^r}(1)(\Gamma - \Delta - x)) = 0$.
Applying Lemmas~\ref{lm:inter} and~\ref{union-res},
we see
$H^1(N_f|_C (-\Gamma - \Delta')) = 0$.
And by Lemma~\ref{lm:inter}, we have $H^1(N_{f_D} (-\Delta)) = 0$.
We conclude $H^1(N^x) = 0$ as desired.
\end{proof}

\begin{proof}[Proof of Theorem~\ref{small-hyp} when $f$ is degenerate and $d'' \geq g'' + r - 1$:]
As $f$ is degenerate, $n \leq r - 1$. Thus,
\[(r + 1) d'' - rg'' + r \geq (r + 1) (g'' + r - 1) - rg'' + r = g'' + r^2 + r - 1 \geq r(r - 1) \geq rn.\]
The result thus follows from Theorem~\ref{main-sp}.
\end{proof}

\begin{proof}[Proof of Theorem~\ref{small-hyp} when $f$ is nondegenerate, $d' \geq g' + r$, and $d'' \geq g'' + r - 1$:]
If $g' = 0$, the result follows from Theorem~\ref{main-hyp}.
Otherwise, as $d' \geq g' + r$ and $g' \geq 1$, we have
\[\rho(d' - 1, g' - 1, r) = \rho(d, g, r) - 1 \geq 0.\]

By Theorem~\ref{main-sp} and Lemma~\ref{lm:r1}, we can specialize
$f$ to $f^\circ \colon C_0 \cup_{\{p, q\}} \pp^1 \to \pp^r$,
where $f^\circ|_{C_0}$ is of degree $d - 1$ and genus $g - 1$,
and $f^\circ|_{\pp^1}$ is a line,
while still passing through a set $\Gamma = \Gamma_0 \cup \{x\}$
of $n$ general points;
more precisely, where
$f^\circ|_{C_0}$ passes through the set $\Gamma_0$ of $n - 1$ general points,
and $f^\circ|_{\pp^1}$ passes through $x$.
By Lemma~\ref{can-degenerate-small-mid}, it suffices to show
$C_0 \cup_{\Gamma_0 \cup \{p, q\}} (\pp^1 \cup_x D) \to \pp^r$
is a BN-curve.

Moreover, by Lemma~\ref{lm:r1}, any deformation of $\Gamma_0 \cup \{p, q\}$
lifts to a deformation of $f^\circ|_{C_0}$.
Since $\pp^1 \cup_x D \to \pp^r$
is an BN-curve by Theorem~\ref{main-sp},
whose degree $d'' + 1$ and genus $g''$ satisfy $d'' + 1 \geq g'' + r$
by assumption, $\pp^1 \cup_x D \to \pp^r$ is an NNS-curve.
By Theorem~\ref{cor14}, we can thus deform $\pp^1 \cup_x D \to \pp^r$
to pass through $n + 1$ general points.
Theorem~\ref{main} then implies $C_0 \cup_{\Gamma_0 \cup \{p, q\}} (\pp^1 \cup_x D) \to \pp^r$
is a BN-curve, as desired.
\end{proof}

\begin{proof}[Proof of Theorem~\ref{small-hyp} when $f$ is nondegenerate, $d' \leq g' + r - 1$, and $d'' \geq g'' + r - 1$:]
If $n \leq r$, the result follows from Theorem~\ref{main-sp}. If $n = r + 1$
and $d'' = r - 1$, the result follows from Theorem~\ref{small-mid}.
We may thus suppose $n \geq r + 1$ with strict inequality if $d'' = r - 1$.

Since $d' \leq g' + r - 1$ and $\rho(d', g', r) \geq 0$,
we have $g' \geq r + 1$.
By our inductive hypothesis,
we can specialize $f$ to $f^\circ \colon C_0 \cup_A \pp^1 \to \pp^r$,
with $\# A = r + 1$,
where $f^\circ|_{C_0}$ is of degree $d - r + 1$ and genus $g - r$,
and $f^\circ|_{\pp^1}$ is of degree $r - 1$,
while still passing through a set $\Gamma = \Gamma_0 \cup \{x\}$
of $n$ general points;
more precisely, where
$f^\circ|_{C_0}$ passes through the set $\Gamma_0$ of $n - 1$ general points
(by Lemma~\ref{lm:r1}),
and $f^\circ|_{\pp^1}$ passes through $x$.

By Lemma~\ref{can-degenerate-small-hyp-f}, it suffices to show
$(C_0 \cup_{\Gamma_0} D) \cup_{A \cup \{x\}} \pp^1 \to \pp^r$ is a BN-curve.
But this holds by induction, unless $n = r + 2$ and $d'' = r - 1$.

If $n = r + 2$ and $d'' = r - 1$, our assumption that $C \cup_\Gamma D \to \pp^r$
has nonnegative Brill--Noether number rearranges to $\rho(d' - r, g' - r, r) \geq 1$.
We can therefore repeat the same construction as above with 
$f^\circ|_{C_0}$ of degree $d - r$ and genus $g - r$,
and $f^\circ|_{\pp^1}$ of degree $r$,
using Theorem~\ref{main} in place of our inductive hypothesis.
By Lemma~\ref{can-degenerate-small-hyp-f}, it suffices to show
$(C_0 \cup_{\Gamma_0} D) \cup_{A \cup \{x\}} \pp^1 \to \pp^r$ is a BN-curve.
But this holds by induction and Theorem~\ref{main}.
\end{proof}

\begin{proof}[Proof of Theorem~\ref{small-hyp} when $d'' \leq g'' + r - 2$:]
Since $\rho(d'', g'', r - 1) \geq 0$ and $d'' \leq g'' + r - 2$, we have $g'' \geq r$.
By Theorem~\ref{main}, we may specialize $g$ to $g^\circ \colon D_0 \cup_\Delta \pp^1 \to H$,
where $\Delta$ is a general set of $r + 1$ points,
$g^\circ|_{D_0}$ is of degree $d'' - r + 1$ and genus $g'' - r$,
and $g^\circ|_{\pp^1}$ is of degree $r - 1$. We specialize $\Gamma$ onto $D_0$;
this can be done so $\Gamma$ remains general by Lemma~\ref{lm:r1}.
By Lemma~\ref{can-degenerate-small-hyp-g}, it suffices to show
$(C \cup_\Gamma D_0) \cup_\Delta \pp^1 \to \pp^r$ is a BN-curve
and 
$\mathbb{H}^1(\mathcal{N}_{g^\circ}) = H^1((g^\circ)^* \oo_{\pp^r}(1)(\Gamma)) = 0$.
To see $\mathbb{H}^1(\mathcal{N}_{g^\circ}) = 0$, we apply Lemmas~\ref{lm:inter}, \ref{union-res}, and~\ref{lm:union-smooth};
to see $H^1((g^\circ)^* \oo_{\pp^r}(1)(\Gamma)) = 0$,
we apply Lemma~\ref{admits-deformation}.

Finally, to show $(C \cup_\Gamma D_0) \cup_\Delta \pp^1 \to \pp^r$ is a BN-curve,
we can apply our inductive hypothesis twice, since
$h := C \cup_\Gamma D_0 \to \pp^r$
admits a deformation passing through $\Delta$
which is transverse to $H$ along $\Delta$
by Lemma~\ref{admits-deformation}.
\end{proof}

\begin{proof}[Proof of Corollary~\ref{cor:small-hyp}.] We argue by induction on $m$;
the base case of $m = 0$ is just Theorem~\ref{small-hyp}.
For the inductive step, the same argument as in Lemma~\ref{can-degenerate-small-hyp-f-var}
implies that it suffices to show the resulting curve is a BN-curve
after we specialize $h_1$ to factor through $H$.
Applying our inductive hypothesis, we conclude that
$C \cup_\Gamma (D \cup_{p_1} \pp^1) \cup_{p_2} \pp^1 \cdots \cup_{p_m} \pp^1 \to \pp^r$
is a BN-curve as desired.
\end{proof}

\bibliographystyle{amsplain.bst}
\bibliography{mrcbib}

\end{document}